\newcommand{\ds}{\displaystyle}
\newcommand{\al}{\alpha}
\theoremstyle{plain}
\newtheorem{theorem}{Theorem}[section]
\newtheorem{remark}[theorem]{Remark}
\newtheorem{lemma}[theorem]{Lemma}
\def \[{\begin{equation}}
\def \]{\end{equation}}
\newif \ifLastSection \LastSectionfalse
\numberwithin{equation}{section}
\newcommand{\R}{{\mathbb R}}
\begin{document}
\title{   The existence of least energy nodal solutions for some class of Kirchhoff equations and Choquard equations}
\author{Hongyu Ye\thanks{a: Partially supported by NSFC NO: 11371159. E-mail address: yyeehongyu@163.com }
\\ \small {College of Science, Wuhan University of Science and Technology,}
\\
\small{ Wuhan 430065, P. R. China}}
\date{}
\maketitle

\begin{abstract}
In this paper, we study the existence of least energy nodal solutions for some class of Kirchhoff type problems. Since Kirchhoff equation is a nonlocal one, the variational setting to look for sign-changing solutions is different from the local cases. By using constrained minimization on the sign-changing Nehari manifold, we prove the Kirchhoff problem has a least energy nodal solution with its energy exceeding twice the least energy. As a co-product of our approaches, we obtain the existence of least energy sign-changing solution for Choquard equations and show that the sign-changing solution has an energy strictly larger than the least energy and less than twice the least energy.

\noindent{\bf Keywords:} Least energy sign-changing solutions; Nonlocal problems; Ekeland variational principle; Nodal solutions\\
\noindent{\bf Mathematics Subject Classification(2000):} 35J20, 35J60\\
\end{abstract}

\section{ Introduction and main result}
In the past years, the following nonlinear Kirchhoff equation
\begin{equation}\label{1.1}
    -\left(a+b\ds\int_{\R^N}|\nabla u|^2\right)\Delta u+V(x)u=f(u),~~~x\in \R^N,
\end{equation}
 has attracted considerable attention, where $N=2,3$, $a,$ $b>0$ are constants and $V:\R^N\rightarrow\R$. Equation \eqref{1.1} is a nonlocal one as the appearance of the term  $\int_{\R^N}|\nabla u|^2$ implies
that \eqref{1.1} is no longer a pointwise identity. This causes some
mathematical difficulties which make the study of \eqref{1.1}
particularly interesting. For more mathematical and physical background, we refer readers to papers \cite{ac,ds,ap1,b,cd,k,l,p} and the references therein. In recent years, by classical variational method, there are many interesting works about the existence of positive solutions and positive ground states to \eqref{1.1}, see e.g. \cite{hz,jw,ly1,ly2,lls,lh,wt,w,ye}. However, as far as we know, there seem no any results on the existence of sign-changing solutions for \eqref{1.1}.

In this paper, the first aim is to establish the existence result of sign-changing solutions for problem \eqref{1.1}. In what follows, we assume the potential $V(x)$ satisfies one of the following two cases:

$(V)$~~$V(x)$ is a constant or $V(x)\in C(\R^N,\R)$ such that $\lim\limits_{|x|\rightarrow+\infty}V(x)=+\infty.$

\noindent Moreover, the nonlinearity $f\in C^1(\R,\R)$ satisfies the following conditions:

$(f_1)$~~$\lim\limits_{s\rightarrow0}\frac{f(s)}{|s|^3}=0$;

$(f_2)$~~There exists $3<q<2^*-1$ such that $\lim\limits_{|s|\rightarrow +\infty}\frac{f(s)}{|s|^q}=0,$
 where $2^*=+\infty$ if $N=2$ and $2^*=6$ if $N=3$;

$(f_3)$~~$\lim\limits_{|s|\rightarrow+\infty}\frac{F(s)}{s^4}=+\infty$, where $F(s)=\int_0^s f(t)dt$;

$(f_4)$~~The function $\frac{f(s)}{|s|^3}$ is nondecreasing on $\R\backslash\{0\}$.\\

To state our main result, for $a>0$ fixed, since $(V)$ holds, we define the Sobolev space
$$H=H_r^1(\R^N)=\{u\in H^1(\R^N)|~u(x)=u(|x|)\}~~\hbox{when}~V(x)~\hbox{is~a~constant}$$ or $$H=\left\{u\in D^{1,2}(\R^N)|~\int_{\R^N}V(x)u^2<+\infty\right\}$$
 with the norm given as
\begin{equation}\label{1.2}
\|u\|=\left(\ds\int_{\R^N}(a|\nabla u|^2+V(x)u^2)\right)^{\frac12},~~\forall~u\in H,
\end{equation}
which is induced by the corresponding inner product on $H$. By $(V)$, the embedding $H\hookrightarrow L^p(\R^N)$ $(2<p<2^*)$ is compact, see e.g. \cite{bwang}. Weak solutions to problem \eqref{1.1} are critical points of the following functional $I:H\rightarrow\R$ defined by
\begin{equation}\label{1.3}
I(u)=\frac12\ds\int_{\R^N}(a|\nabla u|^2+V(x)u^2)+\frac{b}{4}\left(\ds\int_{\R^N}|\nabla u|^2\right)^2-\ds\int_{\R^N}F(u).
\end{equation}
It easily sees that $I\in C^2(H,\R)$ and for any $\varphi\in H$,
\begin{equation}\label{1.4}
\langle I'(u),\varphi\rangle=\ds\int_{\R^N}(a\nabla u\nabla \varphi+V(x)u\varphi)+b\ds\int_{\R^N}|\nabla u|^2\ds\int_{\R^N}\nabla u\nabla \varphi-\ds\int_{\R^N}f(u)\varphi.
\end{equation}
We call $u$ a least energy sign-changing solution to \eqref{1.1} if $u$ is a solution of \eqref{1.1} with $u^\pm\neq0$ and
$$I(u)=\inf\{I(v)|~v^{\pm}\neq0,~I'(v)=0\},$$
 where
$u^+(x)=\max\{u(x),0\}$ and $u^-(x)=\min\{u(x),0\}.$\\

When $b\equiv0$ and $a\equiv1$ in \eqref{1.1}, the equation itself turns to be a semilinear local one:
 \begin{equation}\label{1.4}
-\Delta u+V(x)u=f(u),~~~x\in\R^N.
\end{equation}
There are several ways in the literature to obtain sign-changing solutions for \eqref{1.4}, see e.g. \cite{blw,fmm,bw,ccn,nw,z}. In \cite{blw}, Bartsch, Liu and Weth used minimax arguments in the presence of invariant sets of a descending flow to prove that \eqref{1.4} has a nodal solution if conditions imposed on $V(x)$ ensure the compact embeddings and $f$ satisfies the classical $(AR)$ and monotonicity condition. Under similar assumptions on $f$, Furtado, Maia and Medeiros in \cite{fmm} obtained nodal solutions for \eqref{1.4} by seeking minimizers on the sign-changing Nehari manifold when $V$ may change sign and satisfy mild integral conditions. Nodal solutions of \eqref{1.4} in bounded domains are proved to exist by Noussair and Wei in \cite{nw} via the Ekeland variational principle and the implicit function theorem, and by Bartsch and Weth in \cite{bw} based on the variational method together with the Brouwer degree theory. However, all these mentioned methods heavily reply on the following two decompositions:
\begin{equation}\label{1.5}
I_0(u)=I_0(u^+)+I_0(u^-),
\end{equation}
 \begin{equation}\label{1.6}
 \langle I'_0(u),u^+\rangle=\langle I'_0(u^+),u^+\rangle~~~~\hbox{and}~~~~\langle I'_0(u),u^-\rangle=\langle I'_0(u^-),u^-\rangle,
 \end{equation}
 where $$I_0(u)=\int_{\R^N}(|\nabla u|^2+V(x)u^2)-\int_{\R^N}F(u).$$
Furthermore, \eqref{1.6} and \eqref{1.5} imply that the energy of any sign-changing solution to \eqref{1.4} is larger than two times the least energy in $H$. However, for the case $b>0$, due to the effect of the nonlocal term, the functional $I$ no longer possesses the same decompositions as \eqref{1.5} \eqref{1.6}. Indeed, we have
$$
I(u)=I(u^+)+I(u^-)+\frac{b}{2}\ds\int_{\R^N}|\nabla u^+|^2\ds\int_{\R^N}|\nabla u^-|^2,
$$
 \begin{equation}\label{1.8}
 \langle I'(u),u^+\rangle=\langle I'(u^+),u^+\rangle+b\ds\int_{\R^N}|\nabla u^+|^2\ds\int_{\R^N}|\nabla u^-|^2,
\end{equation}
 \begin{equation}\label{1.9}
 \langle I'(u),u^-\rangle=\langle I'(u^-),u^-\rangle+b\ds\int_{\R^N}|\nabla u^+|^2\ds\int_{\R^N}|\nabla u^-|^2.
 \end{equation}
 So the methods to obtaining sign-changing solutions of the local problem \eqref{1.4} and estimating the energy of sign-changing solutions seem not suitable for our nonlocal one \eqref{1.1}.

 Recently, in \cite{wz}, Wang and Zhou studied the existence of sign-changing solutions for a Schr\"{o}dinger-Poisson system with pure power nonlinearity $|u|^{p-1}u$, which is a nonlocal one. By using constrained minimization on the sign-changing Nehari manifold and the Brouwer degree theory, they proved that under the assumption $(V)$, the Schr\"{o}dinger-Poisson system has a sign-changing solution for all $3<p<5$ and if $V(x)$ is a constant, the energy of any sign-changing solution is strictly larger than the least energy. However, their method strongly depends on the fact that the nonlinearity is homogeneous and the nonlocal term is positive, which makes it could not be applied to our problem \eqref{1.1}, which deals with a more general nonlinearity.

Recall that under the conditions $(V)$ and $(f_1)-(f_4)$, it has been similarly proved as \cite{jw,w} that \eqref{1.1} possesses at least one ground state solution $w\in H$ satisfying that
\begin{equation}\label{1.10}
I(w)=\inf\limits_{v\in\mathcal{N}}I(v):=c,
\end{equation}
where $\mathcal{N}$ is the corresponding Nehari manifold of \eqref{1.1} given as
$$\mathcal{N}=\{u\in H\backslash\{0\}|~\langle I'(u),u\rangle=0\}.$$

Our main result is as follows:

\begin{theorem}\label{th1.1}~~Assume that $(V)$ and $(f_1)-(f_4)$ hold, then problem \eqref{1.1} has at least one least energy sign-changing solution $u\in H$ with $I(u)>2c$, moreover, $u$ has precisely two nodal domains.
\end{theorem}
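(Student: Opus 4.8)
The plan is to obtain the least energy sign-changing solution as a minimizer of $I$ over the \emph{sign-changing Nehari set}
\[
\mathcal{M}=\bigl\{u\in H:\ u^{\pm}\neq 0,\ \langle I'(u),u^{+}\rangle=0,\ \langle I'(u),u^{-}\rangle=0\bigr\},
\]
and to set $m=\inf_{\mathcal{M}}I$. The first step is the elementary but crucial \emph{fibering lemma}: given any $u$ with $u^{\pm}\neq0$, I would study the function $\psi(s,t)=I(su^{+}+tu^{-})$ on $[0,\infty)^{2}$ and show, using $(f_1)$--$(f_4)$ together with the explicit cross-term decomposition \eqref{1.8}--\eqref{1.9}, that there exists a unique pair $(s_{u},t_{u})$ with $s_{u},t_{u}>0$ such that $s_{u}u^{+}+t_{u}u^{-}\in\mathcal{M}$, and that $(s_{u},t_{u})$ is the unique global maximum point of $\psi$ on $[0,\infty)^{2}$. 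The monotonicity hypothesis $(f_4)$ is exactly what makes the two equations $\partial_s\psi=\partial_t\psi=0$ uniquely solvable; the $4$-superlinearity $(f_3)$ gives $\psi\to-\infty$ far out, and $(f_1)$--$(f_2)$ give that the origin and the axes are not maxima, so $(s_u,t_u)$ lies in the open quadrant. Note that the coupling term $b\int|\nabla u^+|^2\int|\nabla u^-|^2$ is \emph{increasing} in both $s$ and $t$, which is compatible with this structure (unlike a sign-indefinite nonlocal term).

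The second step is to show $m$ is attained. Take a minimizing sequence $(u_n)\subset\mathcal{M}$. From the Nehari constraints and $(f_3)$ (the standard argument: $I(u_n)-\frac14\langle I'(u_n),u_n\rangle$ controls $\|u_n\|^2$ from below while the left side is bounded), $(u_n)$ is bounded in $H$, and so are $(u_n^{\pm})$. By the compact embedding $H\hookrightarrow L^p(\R^N)$ for $2<p<2^*$ (valid under $(V)$), pass to a subsequence with $u_n\rightharpoonup u$ in $H$, $u_n\to u$ in $L^p$, $u_n^{\pm}\rightharpoonup u^{\pm}$. One must check $u^{\pm}\neq0$: from $\langle I'(u_n),u_n^{\pm}\rangle=0$, $(f_1)$--$(f_2)$ and Sobolev, $\|u_n^{\pm}\|^2\le \int f(u_n^{\pm})u_n^{\pm}\le \tfrac12\|u_n^\pm\|^2 + C\|u_n^\pm\|_{q+1}^{q+1}$, so $\|u_n^{\pm}\|$ stays bounded away from $0$, hence $\|u_n^{\pm}\|_{q+1}$ does too, and by strong $L^{q+1}$ convergence $u^{\pm}\neq0$. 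Now apply Step~1 to $u$: there are $s,t>0$ with $su^{+}+tu^{-}\in\mathcal{M}$, so $m\le I(su^{+}+tu^{-})$. The key inequality is $I(su^++tu^-)\le \liminf I(s u_n^+ + t u_n^-)\le \liminf I(u_n^+ + u_n^-)=m$: the first uses weak lower semicontinuity of the norm and of the nonlocal term together with strong $L^{q+1}$-convergence for the $F$-term, and the second uses that $(1,1)$ maximizes $\psi_{u_n}$ (Step~1 applied to $u_n\in\mathcal{M}$). Hence $I(su^++tu^-)=m$ and $s=t=1$, i.e.\ $u\in\mathcal{M}$ attains $m$.

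The third step is to show the minimizer $u$ is an actual critical point of $I$ (a \emph{deformation/quantitative} argument rather than a Lagrange multiplier one, since $\mathcal{M}$ is not smoothly a manifold). I would argue by contradiction: if $I'(u)\neq0$, then by continuity there is $\delta>0$ and $\rho>0$ with $\|I'(v)\|\ge\rho$ for $\|v-u\|\le\delta$. Using the fibering structure of Step~1, the map $(s,t)\mapsto su^++tu^-$ has a strict interior maximum at $(1,1)$ on a small square $[1-\mu,1+\mu]^2$, with $I(su^++tu^-)\le m-\beta$ on the boundary of the square for some $\beta>0$. Apply a deformation lemma: there is a continuous $\eta$ with $\eta=\mathrm{id}$ outside a $\delta$-ball, decreasing $I$, such that $I(\eta(su^++tu^-))<m$ wherever $I(su^++tu^-)$ is close to $m$. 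Then show (degree-theory / intermediate-value argument on the $2$-D parameter) that $\eta(\bar s u^+ + \bar t u^-)\in\mathcal{M}$ for some $(\bar s,\bar t)$ near $(1,1)$ — this is where one needs the Brouwer degree, matching the approach mentioned for \cite{wz,fmm} — contradicting the definition of $m$. Once $I'(u)=0$, the energy bound $I(u)>2c$ follows from the decomposition $I(u)=I(u^+)+I(u^-)+\tfrac b2\int|\nabla u^+|^2\int|\nabla u^-|^2$ together with the fact, via Step~1 applied separately to $u^+$ and to $u^-$, that $I(u^\pm)\ge$ the value of $I$ at the point where $su^\pm$ meets $\mathcal N$, which is $\ge c$; the strictly positive cross term then gives $I(u)>c+c=2c$. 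Finally, "precisely two nodal domains" is obtained by the classical argument: if $u=u_1+u_2+u_3$ with $u_1\ge0$, $u_2\le0$ supported in disjoint nodal domains and $u_3\neq0$, set $v=u_1+u_2$; then $\langle I'(u),u_i\rangle=0$ forces, after accounting for the nonlocal cross terms, $I(v)<I(u)=m$ while $v^\pm\neq0$ and $v$ can be projected onto $\mathcal M$ with energy $\le I(v)<m$, a contradiction.

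The main obstacle I anticipate is Step~3: because $\mathcal{M}$ carries the two nonsmooth constraints coming from $u^{\pm}$ and because $I$ is nonlocal, one cannot simply use the Nehari manifold as a $C^1$-submanifold, so the critical-point property must be extracted by a careful deformation argument combined with a Brouwer-degree computation on the two-parameter family $(s,t)\mapsto su^++tu^-$ — keeping track of the extra coupling terms in \eqref{1.8}--\eqref{1.9} when verifying that the deformed function still meets $\mathcal M$. A secondary technical point is ensuring the strict inequality $I(u)>2c$ rather than merely $\ge 2c$, which relies on $\int|\nabla u^\pm|^2>0$ (immediate since $u^\pm\neq0$) and on $(f_4)$ to guarantee the projections of $u^\pm$ onto $\mathcal N$ do not lower the energy below $c$.
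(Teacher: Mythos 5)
Your overall architecture (minimize $I$ on $\mathcal{M}$, two-parameter fibering lemma, attainment, criticality, energy bound, nodal-domain count) matches the paper, and your Steps 1--2 are workable once you actually prove the global-maximum property of $\psi_u(s,t)=I(su^++tu^-)$ that you invoke (the paper instead avoids it by proving that the projection parameters lie in $(0,1]$ and working with the monotone quantity $I-\tfrac14\langle I'(\cdot),\cdot\rangle$). Your Step 3 replaces the paper's Lagrange-multiplier computation (where $f\in C^1$ and \eqref{2.1} force both multipliers to vanish via a $2\times2$ determinant) by a deformation/degree argument; that is a legitimate alternative route. However, there is a genuine gap in your proof of the energy bound $I(u)>2c$, which is the main new content of the theorem. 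You claim that $I(u^{\pm})\ge I(ku^{\pm})\ge c$, where $ku^{\pm}$ is the Nehari projection of $u^{\pm}$. The inequality is reversed. Since $u\in\mathcal{M}$ and the coupling term is positive, \eqref{1.8}--\eqref{1.9} give $\langle I'(u^{\pm}),u^{\pm}\rangle<0$, so $u^{\pm}$ lies \emph{beyond} the unique maximum of the one-dimensional fibering map $s\mapsto I(su^{\pm})$; that maximum is attained exactly at the Nehari point $s=k\in(0,1)$, whence $I(u^{\pm})\le I(ku^{\pm})$, and no lower bound $I(u^{\pm})\ge c$ follows (for $k$ small, $I(u^{\pm})$ can even be negative). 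The same directional error appears in your nodal-domain step, where you assert that the projection of $v=u_1+u_2$ onto $\mathcal{M}$ has energy $\le I(v)$: since the projection is the maximum of the fibering map of $v$, its energy is $\ge I(v)$, and your contradiction evaporates.

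The repair — and the paper's actual argument — is to never compare values of $I$ along a fiber directly, but to use the functional $u\mapsto I(u)-\tfrac14\langle I'(u),u\rangle=\tfrac14\|u\|^2+\int_{\R^N}\bigl(\tfrac14 f(u)u-F(u)\bigr)$, which coincides with $I$ on $\mathcal{N}$ and on $\mathcal{M}$, is additive over $u^{+}$ and $u^{-}$ (the Kirchhoff cross terms cancel), and is strictly increasing under scaling up because $\widetilde F(s)=f(s)s-4F(s)$ is increasing by $(f_4)$. Then $c\le I(ku^{+})=\tfrac{k^2}{4}\|u^{+}\|^2+\int(\tfrac14 f(ku^{+})ku^{+}-F(ku^{+}))<\tfrac14\|u^{+}\|^2+\int(\tfrac14 f(u^{+})u^{+}-F(u^{+}))$ with $k<1$, and adding the analogous estimate for $u^{-}$ yields $2c<I(u)-\tfrac14\langle I'(u),u\rangle=I(u)$. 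The identical device rescues the three-nodal-domain contradiction: for the projection $tu_1+su_2\in\mathcal{M}$ with $t,s\in(0,1)$ one gets $m\le I(tu_1+su_2)<\bigl[I-\tfrac14\langle I'(\cdot),\cdot\rangle\bigr](u_1+u_2)<\bigl[I-\tfrac14\langle I'(\cdot),\cdot\rangle\bigr](u)=m$. As written, your argument for $I(u)>2c$ and for the nodal-domain count does not go through.
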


Now we give our main idea for the proof of Theorem \ref{th1.1}.  To get least energy sign-changing solutions, we define the following constrained manifold:
\begin{equation}\label{1.11}
\mathcal{M}=\{u\in H|~u^\pm\neq0,\langle I'(u),u^\pm\rangle=0\}.
\end{equation}
Obviously, $\mathcal{M}$ contains all sign-changing solutions of \eqref{1.1}. Therefore, it is enough to prove that $m:=\inf\limits_{u\in\mathcal{M}}I(u)$ has a minimizer and the minimizer is just a critical point of $I$ on $H$. However, since it follows from \eqref{1.8} and \eqref{1.9} that $\langle I'(u),u^\pm\rangle>\langle I'(u^\pm),u^\pm\rangle$ if $u^\pm\neq0$, it is difficult to show that $\mathcal{M}\neq\varnothing$ in a usual way. Moreover, the nonlinearity in \eqref{1.1} is more general than that in \cite{wz}, which makes that the method used in \cite{wz} cannot be applied here. To overcome this difficulty, it needs more analysis and some new ideas. By using a totally different approach from \cite{wz}, we succeeded in proving that for each $u\in H$ with $u^\pm\neq0$, there exists a unique pair $(t,s):=(t(u),s(u))\in\R_+\times\R_+$ such that $tu^++su^-\in \mathcal{M}$. Our result is more delicate. We furthermore conclude that if $\langle I'(u),u^\pm\rangle<0,$ then the unique pair $(t,s)$ such that $tu^++su^-\in \mathcal{M}$ must satisfy
\begin{equation}\label{1.12}
t\in(0,1)~\hbox{and}~s\in(0,1).
 \end{equation}
 With the help of such conclusion it can be showed that $m$ is attained and the least energy sign-changing solution of \eqref{1.1} has precisely two nodal domains. If $m$ is achieved by $u\in \mathcal{M}$, then there exist two Lagrange multipliers $\mu_1,\mu_2\in\R$ such that $I'(u)+\mu_1J(u^+)+\mu_2J(u^-)=0,$
  where $J(u^\pm)=\langle I'(u),u^\pm\rangle.$ By using the assumptions $f\in C^1$ and $(f_4)$, it can be proved that $\mu_1=\mu_2=0$, i.e. $u$ is a critical point of $I$. To estimate the energy of the least energy sign-changing solution $u$, although we do not have $u^\pm\in \mathcal{N}$, we do easily see that \begin{equation}\label{1.20}
  \langle I'(u^\pm),u^\pm\rangle<0.
   \end{equation}
   Then $(f_1)-(f_4)$ show that there exists $k,l\in(0,1)$ such that $ku^+,lu^-\in \mathcal{N}$, which implies that $2c\leq I(ku^+)+I(lu^-)<I(u).$ Therefore we complete the proof of Theorem \ref{th1.1}.

\begin{remark}\label{remark1.1}~~Our approach to prove Theorem \ref{th1.1} can be used to deal with the existence of nodal solutions for the following Schrodinger-Poisson system
\begin{equation}\label{1.15}\left\{%
\begin{array}{ll}
-\Delta u+V(x)u+\phi(x)u=f(u),~~x\in\R^3, \\
-\Delta\phi=u^2,~~x\in\R^3\\
\end{array}%
\right.
\end{equation}
under the assumptions $(V)$ and $(f_1)-(f_4)$, while \cite{wz} is a special case of \eqref{1.15} with $f(u)=|u|^{p-1}u$.
\end{remark}

Another aim of this paper is to study the existence of sign-changing solutions for the following Choquard equation:
\begin{equation}\label{1.13}
-\Delta u+V(x)u=(I_\alpha*|u|^p)|u|^{p-2}u,~~x\in\R^N,
\end{equation}
where $N\geq 3;$ $\al\in(0,N)$, $\frac{N+\alpha}{N}<p<\frac{N+\alpha}{N-2}$. The potential $V:\R^N\rightarrow\R$ satisfies $(V)$ given above. $I_\al:\R^N\rightarrow\R$ is the Riesz potential \cite{r} defined as $$I_\al(x)=\frac{\Gamma(\frac{N-\al}{2})}{\Gamma(\frac{\alpha}{2})\pi^{\frac{N}{2}}2^\al}\frac{1}{|x|^{N-\al}},\ \ \ \forall\ x\in\R^N\backslash\{0\}.$$
Problem \eqref{1.13} is also a nonlocal one due to the existence of the nonlocal nonlinearity. It arises in various fields of mathematical physics, such as quantum mechanics, physics of laser beams, the physics of multiple-particle systems, etc. When $N=3$, $V(x)\equiv1$ and $\al=p=2$, \eqref{1.13} turns to be the well-known Choquard-Pekar equation:
$$
-\Delta u+u=(I_2*|u|^2)u,\ \ \ \   x\in\R^3,
$$
which was proposed as early as in 1954 by Pekar \cite{pekar}, and by a work of Choquard 1976 in a certain approximation to Hartree-Fock theory for one-component plasma, see \cite{lieb,ls}. \eqref{1.13} is also known as the nonlinear stationary Hartree equation since if $u$ solves \eqref{1.13} then $\psi=e^{it}u(x)$ is a solitary wave of the following time-dependent Hartree equation
$$i\psi_t=-\Delta \psi-(I_\alpha*|\psi|^p)|\psi|^{p-2}\psi\ \ \ \hbox{in}\ \R^+\times\R^N,$$
see \cite{gv,mpt}. In the past years, the existence of positive solutions, ground states and semiclassical states for \eqref{1.13} have been studied by many researchers, see e.g. \cite{csv,lieb,lions,ms1,ms2}, etc. However, there are few works considering the existence of sign-changing solutions for \eqref{1.13}. Recently, Clapp and Salazar in \cite{cs} obtained radially symmetric sign-changing solutions for \eqref{1.13} on an exterior domain of $\R^N$ with some symmetries when $2\leq p<\frac{N+\alpha}{N-2}$ and $V(x)$ is continuous and radially symmetric and $\inf\limits_{x\in\R^N}V(x)>0$, $V(x)\rightarrow V_\infty>0$ as $|x|\rightarrow+\infty$ and $V(x)\leq Ce^{-k|x|}$ for some $C,k>0$. There seems no works studying \eqref{1.13} under the condition $(V)$ in the literature.

Clearly, since $V(x)$ satisfies condition $(V)$, our working space is still $H$ and its norm is also given as \eqref{1.2} with $a\equiv1$. Weak solutions to \eqref{1.13} correspond to critical points to the following functional $\Psi:H\rightarrow\R$:
\begin{equation}\label{1.14}
\Psi(u)=\frac12\ds\int_{\R^N}(|\nabla u|^2+V(x)u^2)-\frac{1}{2p}\ds\int_{\R^N}(I_\alpha*|u|^p)|u|^p.
\end{equation}
Recall the well-known Hardy-Littlewood-Sobolev inequality, we have for some $C>0$,
$$\ds\int_{\R^N}(I_\alpha*|u|^p)|u|^p\leq C\left(\ds\int_{\R^N}|u|^{\frac{2Np}{N+\alpha}}\right)^{\frac{N+\alpha}{N}}.$$
Then $\Psi\in C^2(H,\R)$ and for any $\varphi\in H$,
$$
\langle\Psi'(u),\varphi\rangle=\ds\int_{\R^N}(\nabla u\nabla \varphi+V(x)u\varphi)-\ds\int_{\R^N}(I_\alpha*|u|^p)|u|^{p-2}u\varphi.
$$
Similarly, we call $u$ a least energy sign-changing solution of \eqref{1.13} if $u$ is a solution of \eqref{1.13} with $u^\pm\neq0$ and
$$\Psi(u)=\inf\{\Psi(v)|~v^\pm\neq0,\Psi'(v)=0\}.$$
Recall it has been shown that \eqref{1.13} has at least one ground state solution $w\in H$ such that
$$\Psi(w)=\inf\limits_{v\in\overline{\mathcal{N}}}\Psi(v):=\bar{c},$$
where $\overline{\mathcal{N}}=\{u\in H\backslash\{0\}|~\langle \Psi'(u),u\rangle=0\}$ is the associated Nehari manifold, see e.g. \cite{ms1,ms2}. Moreover, each ground state solution has constant sign.\\

Our main result is as follows:

\begin{theorem}\label{th1.2}~~Assume that $N\geq 3,$ $\al\in((N-4)_+,N)$, here $(N-4)_+=N-4$ if $N\geq4$, $(N-4)_+=0$ if $N=3$; $2\leq p<\frac{N+\alpha}{N-2}$ and $V(x)$ satisfies $(V)$. Then problem \eqref{1.13} has at least one least energy sign-changing solution $u\in H$ with $\bar{c}<\Psi(u)<2\bar{c}.$
\end{theorem}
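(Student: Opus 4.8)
The plan is to mirror the scheme developed for the Kirchhoff problem in Theorem \ref{th1.1}, since the functional $\Psi$ in \eqref{1.14} exhibits the same nonlocal obstruction as $I$: one has the decomposition
\[
\Psi(u)=\Psi(u^+)+\Psi(u^-)-\frac{1}{p}\int_{\R^N}(I_\alpha*|u^+|^p)|u^-|^p,
\]
so the naive splitting \eqref{1.5}--\eqref{1.6} fails and one works on the constrained manifold
$\overline{\mathcal{M}}=\{u\in H\mid u^\pm\neq0,\ \langle\Psi'(u),u^+\rangle=\langle\Psi'(u),u^-\rangle=0\}$, which contains every sign-changing solution. First I would set $\bar m:=\inf_{u\in\overline{\mathcal M}}\Psi(u)$ and prove the manifold is nonempty and well-behaved: for fixed $u\in H$ with $u^\pm\neq0$, study the function
$(t,s)\mapsto\Psi(tu^++su^-)$ on $\R_+\times\R_+$ and show via the assumptions $2\le p<\frac{N+\alpha}{N-2}$ (which gives superquadratic, subcritical Hardy--Littlewood--Sobolev growth) and the $4$-homogeneity-type monotonicity of the map $t\mapsto t^{-2}\cdot(\text{nonlocal term of }tv)$ that the system $\langle\Psi'(tu^++su^-),tu^+\rangle=\langle\Psi'(tu^++su^-),su^-\rangle=0$ has a unique solution $(t(u),s(u))$, and that if $\langle\Psi'(u),u^\pm\rangle\le 0$ then $t(u),s(u)\in(0,1]$; the cross term $\int(I_\alpha*|u^+|^p)|u^-|^p$ is coupling-monotone in exactly the way needed to run the two-variable argument. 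The role played by $(f_4)$ in the Kirchhoff case is here played automatically by the pure-power structure and $p\ge 2$.

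Next I would show $\bar m$ is attained. Take a minimizing sequence $\{u_n\}\subset\overline{\mathcal M}$; the constraints force $\|u_n^\pm\|$ to be bounded and bounded away from $0$ (using $p>\frac{N+\alpha}{N}$ for the lower bound and $(f_3)$-analogue, i.e. superlinearity, for the upper bound via $\Psi(u_n)-\frac{1}{2p}\langle\Psi'(u_n),u_n\rangle$). By the compact embedding $H\hookrightarrow L^q(\R^N)$ for $2<q<2^*$ guaranteed by $(V)$, pass to $u_n\rightharpoonup u$ weakly and $u_n^\pm\to u^\pm$ strongly in $L^{\frac{2Np}{N+\alpha}}$; here the restriction $\alpha\in((N-4)_+,N)$ is what makes the exponent $\frac{2Np}{N+\alpha}$ lie strictly inside $(2,2^*)$ for all admissible $p$, so the nonlocal term and its derivative converge. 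Hence $u^\pm\neq0$, and using the uniqueness/monotonicity from the first step one gets $t(u),s(u)\le 1$, whence $\Psi(u)\ge\Psi(t(u)u^++s(u)u^-)\ge\bar m$ while lower semicontinuity gives $\Psi(u)\le\bar m$; a standard argument then upgrades weak to strong convergence, so $u\in\overline{\mathcal M}$ realizes $\bar m$.

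Then I would show the minimizer is a genuine critical point. Lagrange multipliers give $\mu_1,\mu_2$ with $\Psi'(u)+\mu_1\,\overline{J}'(u^+)+\mu_2\,\overline{J}'(u^-)=0$ where $\overline J(u^\pm)=\langle\Psi'(u),u^\pm\rangle$; testing against $u^+$ and $u^-$ yields a $2\times2$ linear system whose coefficient matrix is, thanks to $p\ge 2$ (strict convexity making the diagonal dominate), invertible with the right sign, forcing $\mu_1=\mu_2=0$. Finally, for the energy estimate: from $u\in\overline{\mathcal M}$ one reads off $\langle\Psi'(u^\pm),u^\pm\rangle<0$ (the dropped cross term is strictly positive), so by the fibering analysis on $\overline{\mathcal N}$ there are $k,l\in(0,1)$ with $ku^+,lu^-\in\overline{\mathcal N}$, giving $\bar c\le\Psi(ku^+)$, $\bar c\le\Psi(lu^-)$; summing and using that the cross term contributes \emph{negatively} to $\Psi(u)$ one must instead argue $\Psi(u)>\Psi(u^+)+\Psi(u^-)$ is false — rather, since $\Psi(u)=\Psi(u^+)+\Psi(u^-)-\frac1p\int(I_\alpha*|u^+|^p)|u^-|^p$ one gets $\Psi(u)<\Psi(u^+)+\Psi(u^-)$, and then $\Psi(u^\pm)\ge\Psi(ku^+)\ \text{resp.}\ \Psi(lu^-)$... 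The delicate point, and the main obstacle, is precisely to get $\bar c<\Psi(u)<2\bar c$ simultaneously: the lower bound $\bar c<\Psi(u)$ follows because $u$ is sign-changing hence not a ground state and $\Psi(u)=\inf_{\overline{\mathcal M}}\ge$ something strictly above $\bar c$ via the fibering, but the upper bound $\Psi(u)<2\bar c$ must exploit that the cross term is \emph{subtracted}: projecting $u^+$ and $u^-$ separately onto $\overline{\mathcal N}$ and using the strict inequality $\Psi(u)<\Psi(u^+)+\Psi(u^-)$ together with $\Psi(u^\pm)<2\bar c$-type bounds obtained by testing the two projected pieces — this is the step where the sign of the nonlocal coupling is crucial and where the argument genuinely differs from the Kirchhoff case in which the coupling is added and the bound goes the other way ($I(u)>2c$).
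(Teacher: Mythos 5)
Your proposal transplants the Kirchhoff scheme, but the two steps you lean on most are exactly the ones that break for the Choquard functional because the nonlocal coupling enters with the \emph{opposite} sign. First, you claim that $\langle\Psi'(u),u^\pm\rangle\le 0$ forces the projection parameters $t(u),s(u)\in(0,1]$; this is the analogue of \eqref{1.12}, and the paper states explicitly (and correctly) that $\Psi$ does not have this property. Already for $p=2$ the projection system is linear in $(t^2,s^2)$ with $t^2=\frac{A_1B_2-A_2B}{B_1B_2-B^2}$ in the notation \eqref{3.1}; taking $A_1=B_1+B-\epsilon$ and $A_2=B_2+B-\delta$ with $\delta B>\epsilon B_2$ satisfies both sign constraints yet gives $t>1$. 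Hence your minimizing-sequence argument (``pass to the weak limit, project with $t,s\le1$, compare energies'') collapses at its key step. This is precisely why the paper abandons the direct weak-limit argument and instead produces a Palais--Smale sequence via the Ekeland variational principle combined with an implicit-function-theorem perturbation inside $\overline{\mathcal M}$ (the functions $h_n,g_n$ and the determinant $T>0$), from which $\Psi'(u_n)\to0$ and strong convergence follow.

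Second, your energy estimate starts from ``$u\in\overline{\mathcal M}$ gives $\langle\Psi'(u^\pm),u^\pm\rangle<0$,'' which has the wrong sign: from \eqref{1.18}, $\langle\Psi'(u^+),u^+\rangle=\int_{\R^N}(I_\alpha*|u^-|^p)|u^+|^p>0$, so the Nehari projections of $u^\pm$ require $k,l>1$ and the Kirchhoff-style comparison runs the wrong way. You sense this (your paragraph visibly backtracks) but never supply a mechanism for $\Psi(u)<2\bar c$. The paper's actual mechanism is Lemma \ref{lem3.4}: truncate the positive ground state $Q$ to $u_R$, place a negative truncated copy $v_{R,n}$ with disjoint support far away, project $t_Ru_R+s_Rv_{R,n}$ onto $\overline{\mathcal M}$, and show the limits satisfy $\frac{1}{t_0^{2p-2}}-1=\frac{s_0^p}{t_0^p}$ and symmetrically, forcing $t_0,s_0\in(0,1)$ and $\overline m\le(t_0^2+s_0^2)\bar c<2\bar c$. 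This a priori upper bound on $\overline m$ is obtained \emph{before} the minimizer, not read off from it. Your lower bound ($\overline{\mathcal M}\subset\overline{\mathcal N}$ plus the fact that ground states have constant sign, hence strictness) matches the paper and is fine, as is the uniqueness-of-projection lemma; but without a correct replacement for the two items above the proof does not close.
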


Due to the effect of the nonlocal nonlinearity, the functional $\Psi$ does not have the same decompositions like \eqref{1.5}\eqref{1.6}. In fact, for any $u\in H$ with $u^\pm\neq0$,
\begin{equation}\label{1.17}
\Psi(u)=\Psi(u^+)+\Psi(u^-)-\frac1p\ds\int_{\R^N}(I_\alpha*|u^+|^p)|u^-|^p
\end{equation}
and
\begin{equation}\label{1.18}
\langle\Psi'(u),u^+\rangle=\langle\Psi'(u^+),u^+\rangle-\ds\int_{\R^N}(I_\alpha*|u^-|^p)|u^+|^p<\langle\Psi'(u^+),u^+\rangle,
\end{equation}
\begin{equation}\label{1.19}
\langle\Psi'(u),u^-\rangle=\langle\Psi'(u^-),u^-\rangle-\ds\int_{\R^N}(I_\alpha*|u^+|^p)|u^-|^p<\langle\Psi'(u^-),u^-\rangle.
\end{equation}
Then the methods to deal with local equations cannot be applied here. Moreover, the method used in \cite{wz} do not work since their method heavily depends on the associated functional has a positive nonlocal term. Motivated by Theorem \ref{th1.1}, we try to look for a minimizer of $\Psi$ constrained on the following manifold:
\begin{equation}\label{1.16}
\overline{\mathcal{M}}=\{u\in H|~u^\pm\neq0,\langle \Psi'(u),u^\pm\rangle=0\}
\end{equation}
and show that the minimizer is a critical point of $\Psi$ on $H$. However, as the nonlocal term in $\Psi$ is negative while that in $I$ is positive, there must be some differences. The main difference and difficulty is that $\Psi$ does not have properties like \eqref{1.12} and \eqref{1.20}, which makes that the arguments used in Theorem \ref{th1.1} to prove the existence of minimizers and to estimate the energy of sign-changing solutions cannot work. It needs some improvement. We succeeded in overcoming the difficulty by using Ekeland variational principle and the implicit function theorem to construct a Palais-Smale sequence and with the help of the positive ground state solution of \eqref{1.13}.

Throughout this paper, we use standard notations. For simplicity, we
write $\int_{\Omega} h$ to mean the Lebesgue integral of $h(x)$ over
a domain $\Omega\subset\R^N$. $L^{p}:= L^{p}(\R^{N})~(1\leq
p<+\infty)$ is the usual Lebesgue space with the standard norm
$|\cdot|_{p}.$ We use `` $\rightarrow"$ and `` $\rightharpoonup"$ to denote the
strong and weak convergence in the related function space
respectively. $C$ will
denote a positive constant unless specified. We use `` $:="$ to denote definitions and $B_r(x):=\{y\in\R^N|\,|x-y|<r\}$. We denote a subsequence
of a sequence $\{u_n\}$ as $\{u_n\}$ to simplify the notation unless
specified.

The paper is organized as follows. In $\S$ 2, we prove Theorem \ref{th1.1}. In $\S$ 3, we prove Theorem \ref{th1.2}.

\section{Proof of Theorem \ref{th1.1}}
We first give some technical lemmas, which are important in the proof of Theorem \ref{th1.1}.

By $f\in C^1(\R)$ satisfies $(f_1)-(f_4)$, we see that
\begin{equation}\label{2.1}
f'(s)s-3f(s)\geq0~\hbox{for~all}~s;
\end{equation}
$$\widetilde{F}(s):=f(s)s-4F(s)~\hbox{is~increasing~on}~\R$$
and $\ds\frac14f(s)s\geq F(s)\geq0$ for all $s$. By $(f_1)$ and $(f_2)$, for any $\varepsilon>0,$ there exists $C_\varepsilon>0$ such that
\begin{equation}\label{2.7}
|f(s)|\leq \varepsilon |s|+C_\varepsilon |s|^q,~~~~~~|F(s)|\leq \frac{\varepsilon}{2} |s|^2+\frac{C_\varepsilon}{q+1} |s|^{q+1},~~\forall~s\in\R.
\end{equation}
In this section, for any $u\in H$ with $u^\pm\neq0$, we denote for simplicity as follows:
\begin{equation}\label{2.2}
\left\{%
\begin{array}{ll}
\alpha_1:=\|u^+\|^2,~~~\beta_1:=b\left(\ds\int_{\R^N}|\nabla u^+|^2\right)^2,~~~~A:=b\ds\int_{\R^N}|\nabla u^+|^2\ds\int_{\R^N}|\nabla u^-|^2, \\
 \alpha_2:=\|u^-\|^2,~~~\beta_2:=b\left(\ds\int_{\R^N}|\nabla u^-|^2\right)^2.\\
\end{array}%
\right.
\end{equation}
Then $\alpha_i,\beta_i~(i=1,2),A>0$.
\begin{lemma}\label{lem2.1}~~Let $\mathcal{M}$ be defined in \eqref{1.11}. Assume that $f$ satisfies $(f_1)-(f_4)$, then for any $u\in H$ with $u^\pm\neq0$, there exists a unique pair $(t,s):=(t(u),s(u))\in \R_+\times\R_+$ such that $tu^++su^-\in \mathcal{M}$.
\end{lemma}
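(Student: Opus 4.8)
The plan is to study the two-parameter fibering map $\psi(t,s):=I(tu^++su^-)$ on $[0,\infty)\times[0,\infty)$ and to show that it has a unique critical point in the open quadrant $(0,\infty)^2$. Since $u^+$ and $u^-$ have a.e.\ disjoint supports, $\nabla u^+\cdot\nabla u^-=0$ a.e., so for $t,s>0$ the positive and negative parts of $tu^++su^-$ are $tu^+$ and $su^-$; using the formula for $\langle I'(u),\varphi\rangle$ and the notation \eqref{2.2}, together with the identity $A=\sqrt{\beta_1\beta_2}$, one computes
\[
\langle I'(tu^++su^-),\,tu^+\rangle=t^2\alpha_1+t^4\beta_1+t^2s^2A-\int_{\R^N}f(tu^+)\,tu^+,
\]
and the symmetric relation for the $u^-$–component. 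Because $t,s>0$, dividing these by $t^2$ and $s^2$ shows that $tu^++su^-\in\mathcal{M}$ is equivalent to $\nabla\psi(t,s)=0$, i.e.\ to the system
\[
\frac{\alpha_1+s^2A}{t^2}+\beta_1=h_1(t),\qquad \frac{\alpha_2+t^2A}{s^2}+\beta_2=h_2(s),
\]
where $h_1(t):=t^{-4}\int_{\R^N}f(tu^+)\,tu^+=\int_{\{u^+>0\}}\frac{f(tu^+)}{(tu^+)^3}(u^+)^4$ and $h_2$ is built from $u^-$ in the same way.

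For the \emph{existence} part I would argue that $\psi\in C^1$, $\psi(0,0)=0$, and, by $(f_1)$, $\psi(\varepsilon,\varepsilon)>0$ for $\varepsilon>0$ small. Writing $\tfrac b4\bigl(\int_{\R^N}|\nabla(tu^++su^-)|^2\bigr)^2=\tfrac14(\sqrt{\beta_1}\,t^2+\sqrt{\beta_2}\,s^2)^2$ and using $(f_3)$, a standard Fatou argument shows that $\int_{\R^N}F(tu^+)+\int_{\R^N}F(su^-)$ outgrows this quartic term, so $\psi(t,s)\to-\infty$ as $t^2+s^2\to\infty$; hence $\psi$ attains its global maximum at some $(t_0,s_0)$. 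Since $\psi(t_0,s_0)>0=\psi(0,0)$ we have $(t_0,s_0)\neq(0,0)$, and $(f_1)$ forces $\partial_t\psi(t,s_0)>0$ for small $t>0$ (the linear term $t\alpha_1$ dominates), so $t_0=0$ is impossible; likewise $s_0=0$. Thus $(t_0,s_0)\in(0,\infty)^2$ is a critical point of $\psi$, i.e.\ $t_0u^++s_0u^-\in\mathcal{M}$.

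For \emph{uniqueness}, suppose $(t_1,s_1)$ and $(t_2,s_2)$ both solve the system, and assume without loss of generality $t_1\le t_2$. This is where $(f_4)$ is essential: it makes $\tau\mapsto f(\tau)/\tau^3$ nondecreasing on $(0,\infty)$, so $h_1$ (and, by the same sign bookkeeping, $h_2$) is nondecreasing. Subtracting the two $h_1$–equations gives
\[
\alpha_1\Bigl(\tfrac{1}{t_1^2}-\tfrac{1}{t_2^2}\Bigr)+A\Bigl(\tfrac{s_1^2}{t_1^2}-\tfrac{s_2^2}{t_2^2}\Bigr)=h_1(t_1)-h_1(t_2)\le0 ;
\]
the first bracket is $\ge0$ and $A>0$, so $s_1/t_1\le s_2/t_2$. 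If $s_1>s_2$ this contradicts $t_1\le t_2$, so $s_1\le s_2$, and the same manipulation of the two $h_2$–equations gives $s_2/t_2\le s_1/t_1$. Hence $s_1/t_1=s_2/t_2$, which forces the $A$–bracket above to vanish, so $\alpha_1(\tfrac1{t_1^2}-\tfrac1{t_2^2})\le0$; since $\alpha_1>0$ this yields $t_1=t_2$ and then $s_1=s_2$.

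I expect the \emph{main obstacle} to be the uniqueness step: the difficulty is exactly the coupling of $t$ and $s$ through the nonlocal coefficient $A$, which has no counterpart in the local semilinear problem. The ratio argument above works because of two structural facts — the \emph{same} coefficient $A$ occurs in both equations (as $A=\sqrt{\beta_1\beta_2}$), and the exponent $3$ in $(f_4)$ is exactly matched to the quartic Kirchhoff term once the equations are normalized by $t^{-4}$ and $s^{-4}$. A secondary technical point is the limiting behaviour $h_1(t)\to0$ as $t\to0^+$ and $h_1(t)\to+\infty$ as $t\to+\infty$ (equivalently, coercivity and non-degeneracy of $\psi$), which follows from $(f_1)$, $(f_3)$ and the compact embeddings $H\hookrightarrow L^p(\R^N)$, $2<p<2^*$, via the restriction $3<q<2^*-1$; these are routine dominated-convergence and Fatou estimates.
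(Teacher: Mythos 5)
Your proof is correct, and it takes a genuinely different route from the paper's. The paper works directly with the system \eqref{2.3}: it solves the first equation for $s^2$ as a function $g(t)$, substitutes into the second, and shows the resulting one-variable function $h$ has exactly one zero on $(t_*,+\infty)$ by a sign analysis of $h$ and $h'$ based on \eqref{2.1}. You instead decouple existence from uniqueness: existence comes from maximizing the fibering map $\psi(t,s)=I(tu^++su^-)$ over the closed quadrant (coercivity to $-\infty$ from $(f_3)$ via Fatou, positivity near the origin, and the boundary derivative estimate from $(f_1)$--$(f_2)$ pushing the maximizer into the open quadrant), while uniqueness comes from the ratio argument after normalizing the two Nehari identities by $t^{4}$ and $s^{4}$ (a small slip: you say ``dividing by $t^2$ and $s^2$'', but your displayed system is the $t^{-4}$, $s^{-4}$ normalization, which is the correct one and is what your monotonicity of $h_1,h_2$ requires). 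Your uniqueness step is the more transparent of the two: it isolates exactly which structural facts are used --- monotonicity of $f(\sigma)/|\sigma|^3$ from $(f_4)$, strict positivity of $\alpha_1,\alpha_2,A$, and the fact that the \emph{same} coupling constant $A$ appears in both equations (you do not actually need $A=\sqrt{\beta_1\beta_2}$ there, only $A>0$) --- and it closes cleanly because $\alpha_1>0$ forces $t_1=t_2$ once the ratios agree. What the paper's substitution buys is a purely one-dimensional intermediate-value argument with no appeal to attainment of a global maximum; what yours buys is the extra information that $(t(u),s(u))$ is the unique global maximizer of $\psi$ on $\R_+\times\R_+$, which is often useful downstream in minimizing-sequence arguments. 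Both proofs start from the same reduction of $tu^++su^-\in\mathcal{M}$ to the system \eqref{2.3} and use the same hypotheses, so I see no gap.
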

\begin{proof}
For any $u\in H$ with $u^\pm\neq0$, to show that there exists a unique $(t,s)\in \R_+\times\R_+$ such that $tu^++su^-\in \mathcal{M}$ is equivalent to show that the following system
\begin{equation}\label{2.3}
\left\{%
\begin{array}{ll}
t^2\alpha_1+t^4\beta_1+t^2s^2A-\ds\int_{\R^N}f(tu^+)tu^+=0, \\
 s^2\alpha_2+s^4\beta_2+t^2s^2A-\ds\int_{\R^N}f(su^-)su^-=0, \\
 t,s>0\\
\end{array}%
\right.
\end{equation}
has a unique solution.

Define a function $g(t):\R_+\rightarrow\R$ as follows: $$g(t)=\frac{1}{A}\left(\int_{\R^N}\frac{f(tu^+)tu^+}{t^2}-t^2\beta_1-\alpha_1\right),~~~\forall~t>0,$$ then by $(f_1)$, $(f_3)$, we see that $\lim\limits_{t\rightarrow0^+}g(t)=-\frac{\alpha}{A}_1<0$ and $\lim\limits_{t\rightarrow+\infty}g(t)=+\infty.$
By direct computation, we have
$$g'(t)=\frac{1}{A}\left(\ds\int_{\R^N}\frac{f'(tu^+)(tu^+)^2-f(tu^+)tu^+}{t^3}-2t\beta_1\right),$$
which and \eqref{2.1}\eqref{2.7} imply that $g(t)$ has a unique zero point $t_*>0$ and $g(t)<0$ if $t<t_*$; $g(t)>0$ if $t>t_*.$ Moreover, $g'(t)>0$ for all $t\geq t_*$.

By the first equation of \eqref{2.3}, we see that
$$s^2=g(t),~~~t_*<t<+\infty.$$
So to prove this lemma, it is enough to prove that
$$
h(t):=\alpha_2+g(t)\beta_2+t^2A-\ds\int_{\R^N}\frac{f(\sqrt{g(t)}u^-)u^-}{\sqrt{g(t)}}=0,~~~~t_*<t<+\infty
$$
has a unique solution. By the definition of $g(t)$ and $(f_1)$, $(f_3)$, we have that
$$\lim\limits_{t\rightarrow t_*^+}h(t)=\alpha_2+t_*^2A>0,~~~~~~~~\lim\limits_{t\rightarrow+\infty}h(t)=-\infty.$$
Moreover,
$$h'(t)=g'(t)\left(\beta_2-\ds\int_{\R^N}\frac{f'(\sqrt{g(t)}u^-)(\sqrt{g(t)}u^-)^2-f(\sqrt{g(t)}u^-)\sqrt{g(t)}u^-}{2g(t)^2}\right)+2tA.$$
By the definition of $g'(t)$ and \eqref{2.1}, we see that the continuous function $h'(t)$ has a unique zero point on $[t_*,+\infty)$ and
 $$h'(t_*)= g'(t_*)\beta_2+2At_*>0~~~~\hbox{and}~~~~h'(t)\rightarrow-\infty~~~\hbox{as}~~t\rightarrow+\infty.$$
 Therefore, there exists a unique $t_0:=t_0(u)\in(t_*,+\infty)$ such that $h(t_0)=0$. Let $s_0=g(t_0)$, then $(t_0,s_0)$ is a unique pair of solution of system \eqref{2.3}, i.e. $t_0u^++s_0u^-\in\mathcal{M}.$
\end{proof}

\begin{lemma}\label{lem2.2}~~Let $(f_1)-(f_4)$ hold.

$(1)$~~
If $\langle I'(u),u^\pm\rangle<0$, then there exists a unique pair $(t,s)\in(0,1)\times(0,1)$ such that $tu^++su^-\in \mathcal{M}$.

$(2)$~~If $\langle I'(u),u^\pm\rangle>0$, then there exists a unique pair $(t,s)\in(1,+\infty)\times(1,+\infty)$ such that $tu^++su^-\in \mathcal{M}$.
\end{lemma}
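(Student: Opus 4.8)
The plan is to build on Lemma~\ref{lem2.1}, which already supplies, for each fixed $u\in H$ with $u^\pm\neq0$, a \emph{unique} pair $(t_0,s_0)\in\R_+\times\R_+$ solving the system \eqref{2.3}; consequently it only remains to locate this pair relative to $(1,1)$ according to the signs of $\langle I'(u),u^\pm\rangle$. Write
\[
\phi_1(t,s):=t^2\alpha_1+t^4\beta_1+t^2s^2A-\int_{\R^N}f(tu^+)\,tu^+,\qquad
\phi_2(t,s):=s^2\alpha_2+s^4\beta_2+t^2s^2A-\int_{\R^N}f(su^-)\,su^-
\]
for the two left-hand sides of \eqref{2.3}. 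Using the notation \eqref{2.2} together with the splittings \eqref{1.8}--\eqref{1.9}, one checks at once that $\phi_1(1,1)=\langle I'(u),u^+\rangle$ and $\phi_2(1,1)=\langle I'(u),u^-\rangle$, so the hypothesis of~(1) reads $\phi_1(1,1)<0$, $\phi_2(1,1)<0$, and that of~(2) reads $\phi_1(1,1)>0$, $\phi_2(1,1)>0$.

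The single analytic ingredient is the monotonicity contained in $(f_4)$: for a.e.\ $x$ with $u^+(x)>0$ the map $t\mapsto t^{-4}f(tu^+(x))\,tu^+(x)=(u^+(x))^4\,f(tu^+(x))/(tu^+(x))^3$ is nondecreasing on $(0,+\infty)$, being the composition of the nondecreasing map $\sigma\mapsto f(\sigma)/\sigma^3$ with $t\mapsto tu^+(x)$, and similarly with $u^-$ in place of $u^+$; integrating, $\int_{\R^N}t^{-4}f(tu^+)tu^+\ge\int_{\R^N}f(u^+)u^+$ when $t\ge1$ and the reverse inequality when $0<t\le1$. For part~(1) I argue by contradiction: suppose $\max\{t_0,s_0\}\ge1$. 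Since the two equations of \eqref{2.3} have the same structure under the interchange $(u^+,t)\leftrightarrow(u^-,s)$, I may assume $t_0=\max\{t_0,s_0\}\ge1$ (otherwise run the same argument with the second equation and $u^-$). Then $s_0\le t_0$ and $t_0\ge1$, so dividing the first equation of \eqref{2.3} by $t_0^4$ and using $\alpha_1,A>0$ gives
\[
\int_{\R^N}\frac{f(t_0u^+)\,t_0u^+}{t_0^4}=\frac{\alpha_1}{t_0^2}+\beta_1+\frac{s_0^2A}{t_0^2}\le\alpha_1+\beta_1+A,
\]
while the monotonicity above (valid since $t_0\ge1$) gives $\int_{\R^N}t_0^{-4}f(t_0u^+)t_0u^+\ge\int_{\R^N}f(u^+)u^+$; combining, $\langle I'(u),u^+\rangle=\alpha_1+\beta_1+A-\int_{\R^N}f(u^+)u^+\ge0$, and the inequality is strict unless $t_0=s_0=1$, in which case $\langle I'(u),u^+\rangle=\phi_1(1,1)=0$. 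Either way this contradicts $\langle I'(u),u^+\rangle<0$; hence $\max\{t_0,s_0\}<1$, i.e.\ $(t_0,s_0)\in(0,1)\times(0,1)$.

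Part~(2) is entirely parallel with the inequalities reversed: assuming $\min\{t_0,s_0\}\le1$ and, after the same symmetry reduction, $t_0=\min\{t_0,s_0\}\le1$ (so $s_0\ge t_0$), dividing the first equation of \eqref{2.3} by $t_0^4$ now yields $\int_{\R^N}t_0^{-4}f(t_0u^+)t_0u^+=\frac{\alpha_1}{t_0^2}+\beta_1+\frac{s_0^2A}{t_0^2}\ge\alpha_1+\beta_1+A$, whereas $t_0\le1$ forces $\int_{\R^N}t_0^{-4}f(t_0u^+)t_0u^+\le\int_{\R^N}f(u^+)u^+$, so that $\langle I'(u),u^+\rangle\le0$, strictly unless $t_0=s_0=1$; this contradicts $\langle I'(u),u^+\rangle>0$ and forces $(t_0,s_0)\in(1,+\infty)\times(1,+\infty)$. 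I do not expect a genuine obstacle here: the only point demanding care is the bookkeeping that upgrades the chain of $\le$'s to a strict inequality in the boundary configurations $t_0=1$, $s_0=1$ and $t_0=s_0$, and a clean formulation of the symmetry reduction, since the two equations of \eqref{2.3} are not literally identical ($\alpha_1\neq\alpha_2$ and $\beta_1\neq\beta_2$ in general) but are interchanged by swapping the roles of $u^+$ and $u^-$.
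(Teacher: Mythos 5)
Your argument is correct and is essentially the paper's own proof: both rest on the uniqueness from Lemma~\ref{lem2.1} and then exploit the monotonicity in $(f_4)$ by comparing the equation of \eqref{2.3} corresponding to the larger (resp.\ smaller) of $t,s$ with the sign of $\langle I'(u),u^\pm\rangle$, which is exactly the case analysis behind \eqref{2.4}--\eqref{2.5}. Your normalization by $t_0^{4}$ and selection of $\max\{t_0,s_0\}$ is just a tidier packaging of the paper's two cases, and it has the minor merit of handling the boundary configuration $t_0=1$ explicitly.
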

\begin{proof}(1)~~Since $\langle I'(u),u^\pm\rangle<0$, we see that $u^\pm\neq0$. Then Lemma \ref{lem2.1} implies that there exists a unique pair $(t,s)\in\R_+\times\R_+$ such that $tu^++su^-\in \mathcal{M}$. Let us next show that $t,s\in(0,1)$.

By $\langle I'(u),u^\pm\rangle<0$, we have
$$\alpha_1+\beta_1+A<\ds\int_{\R^N}f(u^+)u^+,~~~~~\alpha_2+\beta_2+A<\ds\int_{\R^N}f(u^-)u^-,$$
where $\alpha_i,\beta_i~(i=1,2),A>0$ are given as in \eqref{2.2}. Then by $tu^++su^-\in \mathcal{M}$ and $t,s>0$, we have
\begin{equation}\label{2.4}
\alpha_1\left(1-\frac{1}{t^2}\right)+\left(1-\frac{s^2}{t^2}\right)A<\ds\int_{\R^N}\left(\frac{f(u^+)}{(u^+)^3}-\frac{f(tu^+)}{(tu^+)^3}\right)(u^+)^4
\end{equation}
and
\begin{equation}\label{2.5}
\alpha_2\left(1-\frac{1}{s^2}\right)+\left(1-\frac{t^2}{s^2}\right)A<\ds\int_{\R^N}\left(\frac{f(u^-)}{(u^-)^3}-\frac{f(su^-)}{(su^-)^3}\right)(u^-)^4.
\end{equation}

By contradiction, we just suppose that $t>1$. We have to discuss the following two cases:

\textbf{Case 1.}~~$s\geq t$.

Then $s>1$ and $\frac{t}{s}\leq1$, which and $(f_4)$ imply that
$$\alpha_2\left(1-\frac{1}{s^2}\right)+\left(1-\frac{t^2}{s^2}\right)A>0$$
but
$$\ds\int_{\R^N}\left(\frac{f(u^-)}{(u^-)^3}-\frac{f(su^-)}{(su^-)^3}\right)(u^-)^4\leq0.$$
It contradicts \eqref{2.5}. So Case 1 is impossible.

\textbf{Case 2.}~~$s<t$.

Then $\frac{s}{t}<1$ and $t>1$, which and $(f_4)$ would similarly lead to a contradiction with \eqref{2.4}. So Case 2 is impossible.

Since Cases 1 and 2 are impossible, we must have $0<t<1$. Similarly, we can prove that $0<s<1.$\\

$(2)$~~The proof of $(2)$ is similar to that of $(1)$.
\end{proof}

\begin{remark}\label{rem2.6}~~We conclude from Lemma \ref{lem2.1} and \eqref{2.4}, \eqref{2.5} that

(1)~~if $\langle I'(u),u^+\rangle=0$, $\langle I'(u),u^-\rangle<0$ and $u^+\neq0$, then there exists a unique $s\in(0,1)$ such that $u^++su^-\in \mathcal{M}$;

(2)~~if $\langle I'(u),u^+\rangle<0$, $\langle I'(u),u^-\rangle=0$ and $u^-\neq0$, then there exists a unique $t\in(0,1)$ such that $tu^++u^-\in \mathcal{M}$.

\end{remark}

\begin{lemma}\label{lem2.3}~~$I(u)$ is bounded from below and coercive on $\mathcal{M}$. Moreover, there exists a constant $C>0$ such that $\|u^\pm\|>C>0$ for all $u\in \mathcal{M}.$
\end{lemma}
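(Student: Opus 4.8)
The plan is to prove the three assertions of Lemma~\ref{lem2.3} in order: coercivity of $I$ on $\mathcal{M}$, boundedness from below (which follows once coercivity and continuity are in hand, since any non-coercive lower bound failure would contradict coercivity on the whole set), and the uniform lower bound $\|u^\pm\|>C$. The key structural fact to exploit is that on $\mathcal{M}$ we have $\langle I'(u),u^+\rangle=\langle I'(u),u^-\rangle=0$, so adding these gives $\langle I'(u),u\rangle=0$, i.e. $\mathcal{M}\subset\mathcal{N}$. Hence every $u\in\mathcal{M}$ satisfies
\[
I(u)=I(u)-\frac14\langle I'(u),u\rangle=\frac14\int_{\R^N}(a|\nabla u|^2+V(x)u^2)+\int_{\R^N}\Big(\frac14 f(u)u-F(u)\Big).
\]
By the consequences of $(f_1)$–$(f_4)$ recorded at the start of the section, namely $\tfrac14 f(s)s-F(s)\geq0$ for all $s$, the last integral is nonnegative, so $I(u)\geq\frac14\|u\|^2$. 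This simultaneously gives coercivity (as $\|u\|\to\infty$ on $\mathcal{M}$, $I(u)\to+\infty$) and the lower bound $I(u)\geq 0>-\infty$ on $\mathcal{M}$.

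For the uniform lower bound on $\|u^\pm\|$, I would work with the defining equation $\langle I'(u),u^+\rangle=0$, which by \eqref{1.8} and the notation \eqref{2.2} reads
\[
\alpha_1+\beta_1+A=\int_{\R^N}f(u^+)u^+.
\]
Since $\beta_1,A>0$, the left side is at least $\alpha_1=\|u^+\|^2$, but more usefully it dominates $\|u^+\|^2$; so $\|u^+\|^2\leq\int_{\R^N}f(u^+)u^+$. Now apply the growth estimate \eqref{2.7} with a small $\varepsilon$: $f(u^+)u^+\leq\varepsilon|u^+|^2+C_\varepsilon|u^+|^{q+1}$. Using the continuous embeddings $H\hookrightarrow L^2(\R^N)$ (from the norm \eqref{1.2} and $(V)$) and $H\hookrightarrow L^{q+1}(\R^N)$ (valid since $3<q<2^*-1$ forces $2<q+1<2^*$), we get
\[
\|u^+\|^2\leq \varepsilon C_1\|u^+\|^2 + C_\varepsilon C_2\|u^+\|^{q+1}.
\]
Choosing $\varepsilon$ so that $\varepsilon C_1\leq\tfrac12$ and noting $u^+\neq0$ (so $\|u^+\|>0$ and we may divide), we obtain $\tfrac12\|u^+\|^2\leq C_\varepsilon C_2\|u^+\|^{q+1}$, hence $\|u^+\|^{q-1}\geq\frac{1}{2C_\varepsilon C_2}$, i.e. $\|u^+\|\geq C$ for a constant $C>0$ independent of $u$. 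The identical argument applied to $u^-$ via $\langle I'(u),u^-\rangle=0$ gives $\|u^-\|\geq C$.

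The argument is essentially routine once one observes $\mathcal{M}\subset\mathcal{N}$; the only point requiring a little care is making sure the inequality $\tfrac14 f(s)s - F(s)\ge 0$ is genuinely available — it is, being listed among the consequences of $(f_1)$–$(f_4)$ at \eqref{2.1}ff. The mildly delicate step is the uniform lower bound: one must resist the temptation to simply drop $\beta_1$ and $A$ and instead keep in mind that discarding them only \emph{weakens} the left-hand side, which is exactly the direction needed ($\|u^+\|^2\le\alpha_1+\beta_1+A=\int f(u^+)u^+$ fails, but $\|u^+\|^2=\alpha_1\le\alpha_1+\beta_1+A=\int f(u^+)u^+$ is what we use). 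So no real obstacle arises; the proof is short. I would also remark, for later use in establishing that $m$ is attained, that the same computation shows $I(u)\ge\frac14(\|u^+\|^2+\|u^-\|^2)$ on $\mathcal{M}$, giving $m\ge\frac14\cdot 2C^2>0$.
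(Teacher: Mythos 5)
Your proof is correct and follows essentially the same route as the paper's: both deduce $I(u)=I(u)-\tfrac14\langle I'(u),u\rangle\ge\tfrac14\|u\|^2$ from $\mathcal{M}\subset\mathcal{N}$ and $\tfrac14 f(s)s\ge F(s)$, and both get the uniform bound $\|u^\pm\|\ge C$ from $\|u^\pm\|^2\le\int_{\R^N}f(u^\pm)u^\pm$ combined with \eqref{2.7} and the Sobolev embedding. (The parenthetical in your closing paragraph claiming the inequality $\|u^+\|^2\le\alpha_1+\beta_1+A$ ``fails'' is a slip --- it holds, and is exactly what you use --- but this does not affect the argument.)
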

\begin{proof}
For any $u\in \mathcal{M}$, we have $\langle I'(u),u\rangle=0$, then
$$I(u)=I(u)-\frac{1}{4}\langle I'(u),u\rangle\geq\frac{1}{4}\|u\|^2\geq0,$$
which implies that $I(u)$ is bounded from below and coercive on $\mathcal{M}$.

Since $u^\pm\neq0$ and $\langle I'(u),u^\pm\rangle=0$, by \eqref{2.7} and the Sobolev embedding theorem we see that
$$\begin{array}{ll}
\|u^\pm\|^2&<\ds\int_{\R^N}f(u^\pm)u^\pm\\[5mm]
&\leq \varepsilon \ds\int_{\R^N}|u^\pm|^2+C_\varepsilon\ds\int_{\R^N}|u^\pm|^{q+1}\\[5mm]
&\leq \varepsilon \|u^\pm\|^2+C_\varepsilon\|u^\pm\|^{q+1}.
\end{array}$$
If we take $\varepsilon\in (0,\frac12)$, then we have $\|u^\pm\|>C>0$ for some constant $C>0$ independent of $u$.
\end{proof}

By Lemmas \ref{lem2.1} and \ref{lem2.3}, define
$$
m:=\inf\limits_{u\in \mathcal{M}}I(u),
$$
then $m>0$.

\begin{lemma}\label{lem2.4}~~If $m$ is achieved by $u\in \mathcal{M}$, then $u$ is a critical point of $I$.
\end{lemma}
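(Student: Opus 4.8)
\textbf{Proof proposal for Lemma \ref{lem2.4}.}

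The plan is the standard Lagrange-multiplier argument for constrained minimization, where the main work is showing the multipliers vanish. Suppose $m$ is achieved by $u\in\mathcal{M}$. Consider the constraint functions $J_1(v):=\langle I'(v),v^+\rangle$ and $J_2(v):=\langle I'(v),v^-\rangle$, so that $\mathcal{M}=\{v:v^\pm\neq0,\ J_1(v)=J_2(v)=0\}$. Since $u$ minimizes $I$ on this set, by the Lagrange multiplier rule there exist $\mu_1,\mu_2\in\R$ with
\[
I'(u)+\mu_1 J_1'(u)+\mu_2 J_2'(u)=0
\]
in $H^{-1}$. First I would test this identity against $u^+$ and against $u^-$. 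Using $\langle I'(u),u^\pm\rangle=0$ (since $u\in\mathcal{M}$), this yields a linear $2\times2$ system
\[
\mu_1\langle J_1'(u),u^+\rangle+\mu_2\langle J_2'(u),u^+\rangle=0,\qquad
\mu_1\langle J_1'(u),u^-\rangle+\mu_2\langle J_2'(u),u^-\rangle=0.
\]
To conclude $\mu_1=\mu_2=0$ it suffices to show the coefficient matrix $\bigl(\langle J_i'(u),u^\pm\rangle\bigr)$ is nonsingular, i.e. has nonzero determinant.

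The heart of the proof is computing and estimating these four entries. Differentiating $J_1$ and using the notation $\alpha_1,\beta_1,A$ of \eqref{2.2} (evaluated at $u$, where $t=s=1$ in \eqref{2.3}), one finds expressions like
\[
\langle J_1'(u),u^+\rangle=2\alpha_1+4\beta_1+2A-\int_{\R^N}\bigl(f'(u^+)(u^+)^2+f(u^+)u^+\bigr),
\]
\[
\langle J_1'(u),u^-\rangle=2A=\langle J_2'(u),u^+\rangle,
\]
and the symmetric formula for $\langle J_2'(u),u^-\rangle$. Now I would use $u\in\mathcal{M}$, namely $\alpha_1+\beta_1+A=\int f(u^+)u^+$ (and similarly for the minus part), to rewrite $\langle J_1'(u),u^+\rangle=-2\beta_1+\int\bigl(3f(u^+)u^+-f'(u^+)(u^+)^2\bigr)-2\beta_1\cdots$; more carefully, substituting gives
\[
\langle J_1'(u),u^+\rangle=-2\beta_1-\int_{\R^N}\bigl(f'(u^+)(u^+)^2-3f(u^+)u^+\bigr),
\]
which is \emph{strictly negative} because $\beta_1>0$ and, by \eqref{2.1}, $f'(s)s-3f(s)\geq0$. (One should check the inequality is strict, or at least that $\beta_1>0$ suffices, which it does since $u^+\neq0$.) Likewise $\langle J_2'(u),u^-\rangle<0$. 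The determinant of the matrix is therefore
\[
\langle J_1'(u),u^+\rangle\langle J_2'(u),u^-\rangle-(2A)^2.
\]
The sign of this is not immediately obvious, so here I would instead argue more cleverly: if $(\mu_1,\mu_2)\neq(0,0)$, then the function $w:=\mu_1 u^+ + \mu_2 u^-$ (or a related combination) would have to satisfy an inequality that is incompatible with the strict negativity of the diagonal entries together with $A>0$. Concretely, testing the multiplier identity appropriately and invoking that the quadratic form with matrix $\begin{pmatrix}\langle J_1'(u),u^+\rangle & 2A\\ 2A & \langle J_2'(u),u^-\rangle\end{pmatrix}$ is negative on the diagonal, one rules out nontrivial $(\mu_1,\mu_2)$ unless the off-diagonal term dominates; but one shows it cannot, using that $A=b\int|\nabla u^+|^2\int|\nabla u^-|^2$ is controlled by $\beta_1,\beta_2$ via $A^2\le\beta_1\beta_2$ (Cauchy–Schwarz), while the diagonal entries are each at least $2\beta_1$ and $2\beta_2$ in absolute value.

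I expect the main obstacle to be precisely this last step: showing the $2\times2$ determinant has the right sign (equivalently, that the Lagrange system forces $\mu_1=\mu_2=0$). The clean route is the bound $A^2\le\beta_1\beta_2$ from Cauchy–Schwarz, combined with $|\langle J_i'(u),\cdot\rangle|\ge 2\beta_i$ on the diagonal coming from \eqref{2.1}; then
\[
\langle J_1'(u),u^+\rangle\langle J_2'(u),u^-\rangle\ge 4\beta_1\beta_2\ge 4A^2=(2A)^2,
\]
with at least one strict inequality (e.g. because $f'(s)s-3f(s)\ge0$ contributes nonnegatively, or because equality in Cauchy–Schwarz would force $\nabla u^+$ and $\nabla u^-$ proportional, impossible as they have disjoint supports and are both nonzero). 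Hence the determinant is positive, the homogeneous system has only the trivial solution, so $\mu_1=\mu_2=0$, and therefore $I'(u)=0$, i.e. $u$ is a critical point of $I$. A small technical point to handle along the way is justifying the Lagrange multiplier rule itself, which requires $J_1,J_2$ to be $C^1$ near $u$ (true since $I\in C^2$) and $\{J_1'(u),J_2'(u)\}$ to be independent — but that independence is exactly what the determinant computation establishes, so it is cleanest to phrase the whole argument as: either the multipliers exist and the above forces them to vanish, or the constraint gradients are dependent, and one checks the dependent case also leads to $I'(u)=0$ by the same sign analysis.
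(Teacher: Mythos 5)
Your overall strategy is exactly the paper's: invoke Lagrange multipliers, test $I'(u)+\mu_1J_1'(u)+\mu_2J_2'(u)=0$ against $u^\pm$, obtain a $2\times2$ homogeneous linear system with off-diagonal entries $2A$, and show the coefficient matrix is nonsingular using \eqref{2.1}. However, the execution of the crucial nondegeneracy step contains a genuine gap. First, your substitution is algebraically wrong: using $\langle I'(u),u^+\rangle=0$, i.e.\ $\int f(u^+)u^+=\alpha_1+\beta_1+A$, the diagonal entry is
\[
\langle J_1'(u),u^+\rangle=2\alpha_1+4\beta_1+2A-\int_{\R^N}\bigl(f'(u^+)(u^+)^2+f(u^+)u^+\bigr)=\alpha_1+3\beta_1+A-\int_{\R^N}f'(u^+)(u^+)^2,
\]
and since \eqref{2.1} gives $\int f'(u^+)(u^+)^2\geq 3\int f(u^+)u^+=3(\alpha_1+\beta_1+A)$, one gets $\langle J_1'(u),u^+\rangle\leq-2\alpha_1-2A$. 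The correct lower bound on the modulus of the diagonal entries involves $\alpha_i$ and $A$, not $\beta_i$; your claimed bound $|\langle J_i'(u),\cdot\rangle|\geq 2\beta_i$ does not follow (and your displayed rewriting $-2\beta_1-\int(f'(u^+)(u^+)^2-3f(u^+)u^+)$ is not equal to the entry).

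Second, and more seriously, the step you yourself identify as the heart of the matter fails as written: there is no Cauchy--Schwarz inequality $A^2\leq\beta_1\beta_2$ with possible strictness. Since $A=b\int|\nabla u^+|^2\int|\nabla u^-|^2$, $\beta_1=b(\int|\nabla u^+|^2)^2$, $\beta_2=b(\int|\nabla u^-|^2)^2$ are all products of the same two scalars, one has the exact identity $A^2=\beta_1\beta_2$. Consequently your determinant estimate yields only $d_1d_2-4A^2\geq4\beta_1\beta_2-4A^2=0$, and neither of your proposed sources of strictness works: equality is not an ``equality case of Cauchy--Schwarz'' to be ruled out by disjoint supports, and $f'(s)s-3f(s)\geq0$ may vanish identically on the range of $u$ (the hypotheses only force $f(s)/|s|^3$ to be nondecreasing), so that term contributes nothing. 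The repair is the bound above: $d_1d_2-4A^2\geq(2\alpha_1+2A)(2\alpha_2+2A)-4A^2=4\alpha_1\alpha_2+4A(\alpha_1+\alpha_2)>0$ because $\alpha_i=\|u^\pm\|^2>0$. This is what the paper's phrase ``we easily conclude from \eqref{2.1}'' amounts to. (A further shared informality, which I will not hold against you since the paper does the same: $v\mapsto v^\pm$ is not $C^1$ on $H$, so the Lagrange rule for the constraints $J_1,J_2$ deserves more care than ``$I\in C^2$''.)
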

\begin{proof}~~Assume that $u\in \mathcal{M}$ such that $I(u)=m$, then $u$ is a critical point of $I$ constained on $\mathcal{M}$. Hence there exist two Lagrange multipliers $\mu_1,\mu_2\in\R$ such that $$I'(u)+\mu_1 J'(u^+)+\mu_2 J'(u^-)=0,$$ where
$J(u^\pm):=\langle I'(u),u^\pm\rangle$. So, we have $\langle I'(u)+\mu_1 J'(u^+)+\mu_2 J'(u^-),u^\pm\rangle=0,$ i.e.
\begin{equation}\label{2.8}
\left\{%
\begin{array}{ll}
    \mu_1\left(\ds\int_{\R^N}[3f(u^+)u^+-f'(u^+)(u^+)^2]-2\alpha_1-2A\right)+\mu_22A=0, \\
 \mu_12A+\mu_2\left(\ds\int_{\R^N}[3f(u^-)u^--f'(u^-)(u^-)^2]-2\alpha_2-2A\right)=0, \\
\end{array}%
\right.\end{equation}
where $\alpha_i~(i=1,2),A>0$ are given as in \eqref{2.2}. We easily conclude from \eqref{2.1} that the linear system \eqref{2.8} has only null solution. So the lemma is proved.
\end{proof}

\begin{lemma}\label{lem2.5}~~If $m$ is attained, then $$m=\inf\{I(v)|~v^\pm\neq0,I'(v)=0\}.$$
\end{lemma}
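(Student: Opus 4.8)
The statement asserts that the infimum $m$ of $I$ over the constraint manifold $\mathcal{M}$ coincides with the least energy among all sign-changing critical points of $I$. One inequality is essentially free: every sign-changing solution $v$ (i.e.\ $v^\pm\neq0$, $I'(v)=0$) satisfies $\langle I'(v),v^\pm\rangle=0$, hence $v\in\mathcal{M}$, so $m=\inf_{\mathcal{M}}I\leq\inf\{I(v):v^\pm\neq0,\,I'(v)=0\}$. The work is entirely in the reverse inequality.

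\medskip
First I would invoke Lemma \ref{lem2.4}: if $m$ is attained by some $u\in\mathcal{M}$, then $u$ is a critical point of $I$, and since $u\in\mathcal{M}$ we have $u^\pm\neq0$, so $u$ is itself a sign-changing solution of \eqref{1.1}. Therefore $u$ is admissible in the set $\{v:v^\pm\neq0,\,I'(v)=0\}$, which gives $\inf\{I(v):v^\pm\neq0,\,I'(v)=0\}\leq I(u)=m$. Combining the two inequalities yields the claimed equality. The proof is thus short: it reduces to quoting Lemma \ref{lem2.4} for the hard direction and the trivial containment $\{\text{sign-changing solutions}\}\subset\mathcal{M}$ for the easy direction.

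\medskip
The only subtlety worth a sentence is making sure the two sets compared are genuinely the same objects: the definition of least energy sign-changing solution given in the introduction requires $v^\pm\neq0$ and $I'(v)=0$, and membership in $\mathcal{M}$ requires $v^\pm\neq0$ and $\langle I'(v),v^\pm\rangle=0$; a critical point obviously satisfies the latter, so $\mathcal{M}$ contains the set on the right-hand side, while Lemma \ref{lem2.4} supplies a minimizer of $I|_{\mathcal{M}}$ that lands back inside that right-hand set. I do not expect any real obstacle here — this lemma is a bookkeeping step that packages Lemma \ref{lem2.4} into the language of the main theorem. The genuine difficulty, attained elsewhere in the argument, is showing that $m$ is in fact achieved (a compactness/minimizing-sequence argument using the compact embedding $H\hookrightarrow L^p$ from $(V)$ together with the coercivity from Lemma \ref{lem2.3} and the change-of-sign control from Lemma \ref{lem2.2}); once that is in hand, Lemma \ref{lem2.5} is immediate.
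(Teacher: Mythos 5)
Your proof is correct and is essentially identical to the paper's: the easy direction follows from the containment $\{v:v^\pm\neq0,\ I'(v)=0\}\subset\mathcal{M}$, and the reverse direction follows from Lemma \ref{lem2.4}, which shows the minimizer $u\in\mathcal{M}$ is itself a sign-changing critical point. If anything, your write-up states the inequality coming from the set containment in the correct direction ($m\leq\inf\{I(v):v^\pm\neq0,\ I'(v)=0\}$), whereas the paper's second displayed inequality has its sign reversed by an apparent typo.
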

\begin{proof}~~Suppose that there exists $u\in \mathcal{M}$ such that $I(u)=m$, then Lemma \ref{lem2.4} shows that $u$ is a critical point of $I$ with $u^\pm\neq0$. Hence
\begin{equation}\label{2.12}
m=I(u)\geq \inf\{I(v)|~~v^\pm\neq0,I'(v)=0\}.
\end{equation}

On the other hand, since $\mathcal{M}\neq \varnothing$ and $\{v\in H|~v^\pm\neq0,I'(v)=0\}\subset\mathcal{M},$
we have $$\inf\{I(v)|~v^\pm\neq0, I'(v)=0\}\leq \inf\limits_{u\in\mathcal{M}}I(u)=m,$$
which and \eqref{2.12} imply the lemma.

\end{proof}

\noindent $\textbf{Proof of Theorem \ref{th1.1}}$\,\,\

\begin{proof} We complete the proof in three steps.

\textbf{Step~1.}~~Let $\{u_n\}\subset \mathcal{M}$ be a minimizing sequence of $m$, then Lemma \ref{lem2.3} shows that the sequences $\{u_n^\pm\}$ are respectively uniformly bounded in $H$. Up to a subsequence, there are $u^\pm\in H$ such that
\begin{equation}\label{2.9}
u_n^\pm\rightharpoonup u^\pm~~~\hbox{in}~H
\end{equation}
as $n\rightarrow+\infty$. By the compactness of sobolev embedding $H\hookrightarrow L^p(\R^N)$, $2<p<2^*$, we see that $$u^+\geq0,~~~~u^-\leq0~~~\hbox{and}~~~u^+\cdot u^-=0~~\hbox{a.e.~in}~\R^N.$$
Moreover, by \eqref{2.7} and Lemma \ref{lem2.3}, we have
$$C\leq \lim\limits_{n\rightarrow+\infty}\|u_n^+\|^2\leq \lim\limits_{n\rightarrow+\infty}\int_{\R^N}f(u_n^\pm)u_n^\pm=\int_{\R^N}f(u^\pm)u^\pm,$$
which implies that $u^\pm\neq0$.

Set $u=u^++u^-$, by $u_n\in \mathcal{M}$ and \eqref{2.9}, we see that
$$\|u^+\|^2+b\ds\int_{\R^N}|\nabla u|^2\ds\int_{\R^N}|\nabla u^+|^2\leq \ds\int_{\R^N}f(u^+)u^+$$
and
$$\|u^-\|^2+b\ds\int_{\R^N}|\nabla u|^2\ds\int_{\R^N}|\nabla u^-|^2\leq \ds\int_{\R^N}f(u^-)u^-,$$
i.e. $\langle I'(u),u^\pm\rangle\leq0$. Then by Lemma \ref{lem2.2} and Remark \ref{rem2.6}, there exists a unique pair $(t,s)\in(0,1]\times(0,1]$ such that $tu^++su^-\in \mathcal{M}$, hence by \eqref{2.9} again and $(f_4)$, we have
$$\begin{array}{ll}
m&\leq I(tu^++su^-)\\[5mm]
&= \ds I(tu^++su^-)-\frac{1}{4}\langle I'(tu^++su^-),tu^++su^-\rangle\\[5mm]
&=\ds\frac{t^2\|u^+\|^2+s^2\|u^-\|^2}{4}+\ds\int_{\R^N}\left(\frac{1}{4}f(tu^+)tu^+-F(tu^+)+\frac{1}{4}f(su^-)su^--F(su^-)\right)\\[5mm]
&\leq \ds\frac{\|u\|^2}{4}+\ds\int_{\R^N}\left(\frac{1}{4}f(u)u-F(u)\right)\\[5mm]
&\leq\ds\liminf\limits_{n\rightarrow+\infty}\left[\frac{\|u_n\|^2}{4}+\ds\int_{\R^N}\left(\frac{1}{4}f(u_n)u_n-F(u_n)\right)\right]\\[5mm]
&=\ds\liminf\limits_{n\rightarrow+\infty}\left(I(u_n)-\frac14\langle I'(u_n),u_n\rangle\right)=m,
\end{array}$$
which implies that $t=s=1$ and $I(u)=m$, i.e. $m$ is achieved by $u\in \mathcal{M}$. Then we conclude from Lemma \ref{lem2.4} that $u$ is a critical point of $I$.

\textbf{Step~2.}~~Since $u\in \mathcal{M}$ and $I(u)=m$, $u^\pm\neq0$ and $\langle I'(u^\pm),u^\pm\rangle<0$. Then by $(f_1)-(f_4)$, there exist $k,l\in(0,1)$ such that
$ku^+\in \mathcal{N}$ and $lu^-\in \mathcal{N}$. Hence by $(f_4)$, we have
$$\begin{array}{ll}
c&\leq I(ku^+)= \ds I(ku^+)-\frac{1}{4}\langle I'(ku^+),ku^+\rangle\\[5mm]
&=\ds\frac{k^2\|u^+\|^2}{4}+\ds\int_{\R^N}\left(\frac{1}{4}f(ku^+)ku^+-F(ku^+)\right)\\[5mm]
&<\ds\frac{\|u^+\|^2}{4}+\ds\int_{\R^N}\left(\frac{1}{4}f(u^+)u^+-F(u^+)\right).
\end{array}$$
Similarly,
$$c<\ds\frac{\|u^-\|^2}{4}+\ds\int_{\R^N}\left(\frac{1}{4}f(u^-)u^--F(u^-)\right).$$
So,
$$\begin{array}{ll}
2c&<\ds\frac{\|u^+\|^2+\|u^-\|^2}{4}+\ds\int_{\R^N}\left[\left(\frac{1}{4}f(u^+)u^+-F(u^+)\right)+\left(\frac{1}{4}f(u^-)u^--F(u^-)\right)\right]\\[5mm]
&=\ds\frac{\|u\|^2}{4}+\ds\int_{\R^N}\left(\frac{1}{4}f(u)u-F(u)\right)\\[5mm]
&=\ds I(u)-\frac{1}{4}\langle I'(u),u\rangle=I(u)=m.
\end{array}$$

\textbf{Step~3.}~~By contradiction, we assume that $u$ has at least three nodal domains $\Omega_1,\Omega_2,\Omega_3$. Without loss of generality, we may assume that $u>0$ a.e. in $\Omega_1$ and $u<0$ a.e. in $\Omega_2$. Set
\begin{equation}\label{2.10}
u_i:=\chi_{\Omega_i}u,~~i=1,2,3,
\end{equation}
where
\begin{equation}\label{2.11}
\chi_{\Omega}=\left\{
 \begin{array}{ll}
 1,\,\,\, &x\in\Omega_i, \\
 0\,\,\, &x\in\R^N\backslash\Omega_i.
 \end{array}
 \right.\end{equation}
 Then $u_i\in H$ and $u_i\neq0$. Hence it follows from step 1 that $\langle I'(u),u_i\rangle=0$. Since $\int_{\R^N}|\nabla u_3|^2\neq0$, we see that $\langle I'(u_1+u_2),(u_1+u_2)^\pm\rangle<0$. By Lemma \ref{lem2.2}, there exists $(t,s)\in(0,1)\times(0,1)$ such that $tu_1+su_2\in \mathcal{M}$. Hence by $(f_4)$, we see that
 $$\begin{array}{ll}
m&\leq I(tu_1+su_2)\\[5mm]
 &=\ds I(tu_1+su_2)-\frac{1}{4}\langle I'(tu_1+su_2),tu_1+su_2\rangle\\[5mm]
 &=\ds\frac{t^2\|u_1\|^2+s^2\|u_2\|^2}{4}+\ds\int_{\R^N}\left[\left(\frac{1}{4}f(tu_1)tu_1-F(tu_1)\right)+\left(\frac{1}{4}f(su_2)su_2-F(su_2)\right)\right]\\[5mm]
 &<\ds\frac{\|u_1+u_2\|^2}{4}+\ds\int_{\R^N}\left(\frac{1}4f(u_1+u_2)(u_1+u_2)-F(u_1+u_2)\right)\\[5mm]
 &<\ds\frac{\|u_1+u_2\|^2}{4}+\ds\int_{\R^N}\left(\frac{1}4f(u_1+u_2)(u_1+u_2)-F(u_1+u_2)\right)\\[5mm]
 &~~~~~~~~~~~~~~~~~~~~~~~~~~~~~~+\ds\frac{\|u_3\|^2}{4}+\ds\int_{\R^N}\left(\frac14f(u_3)u_3-F(u_3)\right)\\[5mm]
 &=\ds\frac{\|u\|^2}{4}+\ds\int_{\R^N}\left(\frac{1}4f(u)u-F(u)\right)\\[5mm]
 &=\ds I(u)-\frac14\langle I'(u),u\rangle=I(u)=m,
 \end{array}$$
 which is impossible, so $u$ has precisely two nodal domains.
\end{proof}

\section{Proof of Theorem \ref{th1.2}}

In this section, we consider the existence of sign-changing solutions for the Choquard equation \eqref{1.13}.

\begin{lemma}\label{lem3.1}(\cite{ll}, Theorem 9.8)~~Let $N\geq1$ and $\alpha\in(0,N)$, then for any $f,g\in L^{\frac{2N}{N+\alpha}}(\R^N)$,
$$\left(\ds\int_{\R^N}(I_\alpha*f)g\right)^2\leq\ds\int_{\R^N}(I_\alpha*f)f\ds\int_{\R^N}(I_\alpha*g)g,$$
with equality for $g\not\equiv0$ if and only if $f=Cg$ for some constant $C.$
\end{lemma}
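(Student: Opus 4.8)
The final statement is Lemma~\ref{lem3.1}, which is the Hardy--Littlewood--Sobolev-type bilinear inequality $\left(\int_{\R^N}(I_\alpha*f)g\right)^2\leq\int_{\R^N}(I_\alpha*f)f\int_{\R^N}(I_\alpha*g)g$ together with its rigidity statement. My plan is to treat this as a Cauchy--Schwarz inequality with respect to the positive-definite bilinear form
\[
\langle f,g\rangle_\alpha:=\int_{\R^N}(I_\alpha*f)(x)\,g(x)\,dx,
\]
which makes sense for $f,g\in L^{\frac{2N}{N+\alpha}}(\R^N)$ by the classical Hardy--Littlewood--Sobolev inequality. First I would record that this form is symmetric and positive semidefinite. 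Symmetry is immediate from the fact that $I_\alpha$ is an even kernel and Fubini's theorem (justified by HLS). Positivity is the crux: one shows $\langle f,f\rangle_\alpha\geq0$ for all real $f$, and in fact $\langle f,f\rangle_\alpha=0$ only when $f\equiv0$. Once positive semidefiniteness is in hand, the inequality itself is the standard Cauchy--Schwarz argument: expand $\langle f+tg,f+tg\rangle_\alpha\geq0$ as a quadratic in $t\in\R$, and the nonnegativity of this quadratic forces its discriminant to be nonpositive, which is exactly the claimed inequality.

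The heart of the matter — the step I expect to be the main obstacle — is establishing the positive-definiteness of $\langle\cdot,\cdot\rangle_\alpha$, because the Riesz kernel $I_\alpha(x)=c_{N,\alpha}|x|^{-(N-\alpha)}$ is not integrable and one cannot simply cite ``convolution with a positive function.'' The clean route is via the Fourier transform: the Riesz potential satisfies $\widehat{I_\alpha*f}(\xi)=|2\pi\xi|^{-\alpha}\hat f(\xi)$ (up to normalization), so by Plancherel,
\[
\langle f,g\rangle_\alpha=\int_{\R^N}|2\pi\xi|^{-\alpha}\hat f(\xi)\overline{\hat g(\xi)}\,d\xi,
\]
and with $f,g$ real this reads $\int |2\pi\xi|^{-\alpha}\,\mathrm{Re}\big(\hat f(\xi)\overline{\hat g(\xi)}\big)\,d\xi$. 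Since $|2\pi\xi|^{-\alpha}>0$ a.e., the quadratic form $\langle f,f\rangle_\alpha=\int|2\pi\xi|^{-\alpha}|\hat f(\xi)|^2\,d\xi\geq0$, and it vanishes only if $\hat f=0$ a.e., i.e.\ $f\equiv0$. One technical point to handle with care is that for $f\in L^{\frac{2N}{N+\alpha}}(\R^N)$ the Fourier transform need not be a function in the usual sense; the standard remedy is to first prove everything for $f,g$ in a dense subclass (say Schwartz functions, or $C_c^\infty$), where all the Fourier manipulations are rigorous, and then pass to the limit using the HLS inequality to control $\langle f,g\rangle_\alpha$ continuously in the $L^{\frac{2N}{N+\alpha}}$ topology. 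Alternatively, one can avoid Fourier analysis entirely by using the semigroup/subordination representation $|x|^{-(N-\alpha)}=c\int_0^\infty t^{\frac{N-\alpha}{2}-1}e^{-|x|^2/(4t)}\,\frac{dt}{t}$ and the pointwise positivity of the Gaussian heat kernel, which yields $\langle f,f\rangle_\alpha=c\int_0^\infty t^{\frac{N-\alpha}{2}-1}\big(\int_{\R^N}|(G_t*f)(x)|^2\,dx\big)\,\frac{dt}{t}\geq0$; this is perhaps the most self-contained argument and also makes the equality case transparent.

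For the equality statement, I would argue as follows. Equality in the Cauchy--Schwarz step $\langle f,g\rangle_\alpha^2=\langle f,f\rangle_\alpha\langle g,g\rangle_\alpha$ with $g\not\equiv0$ forces the quadratic $t\mapsto\langle f+tg,f+tg\rangle_\alpha$ to have a real double root $t=t_0$, i.e.\ $\langle f+t_0 g,f+t_0 g\rangle_\alpha=0$. By the strict positive-definiteness established above (via Plancherel: $\int|2\pi\xi|^{-\alpha}|\widehat{f+t_0g}(\xi)|^2\,d\xi=0$ forces $\widehat{f+t_0g}=0$ a.e., or equivalently via the heat-semigroup representation $G_t*(f+t_0g)=0$ for all $t>0$, hence $f+t_0g=0$), we conclude $f=-t_0 g$, which is the desired form $f=Cg$ with $C=-t_0$. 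Conversely, if $f=Cg$ then equality obviously holds by bilinearity. The whole proof is short; I would simply cite that this is Theorem~9.8 of the Lieb--Loss book \cite{ll} as the reference, and, if a self-contained argument is wanted, present the heat-kernel subordination computation since it handles both the inequality and the rigidity uniformly without any distributional Fourier transform subtleties — the only care needed there is Tonelli's theorem to interchange the $t$-integral with the $x$-integral, which is legitimate because every integrand in sight is nonnegative.
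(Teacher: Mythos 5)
The paper offers no proof of this statement at all: Lemma 3.1 is quoted verbatim as Theorem 9.8 of Lieb--Loss \cite{ll} and used as a black box. Your argument is correct and is essentially the proof given in that reference: symmetry plus strict positive-definiteness of the bilinear form $\langle f,g\rangle_\alpha=\int_{\R^N}(I_\alpha*f)g$, established through the Gaussian subordination of the Riesz kernel and the heat semigroup property, followed by Cauchy--Schwarz; the equality case via the double root $\langle f+t_0g,f+t_0g\rangle_\alpha=0$ and strict definiteness is also handled correctly (note that $g\not\equiv0$ guarantees $\langle g,g\rangle_\alpha>0$, so the quadratic in $t$ is nondegenerate). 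The only flaw is a computational slip in the subordination formula: as written, $c\int_0^\infty t^{\frac{N-\alpha}{2}-1}e^{-|x|^2/(4t)}\,\frac{dt}{t}$ does not equal $|x|^{-(N-\alpha)}$ (the substitution $u=|x|^2/(4t)$ produces a positive power of $|x|$ and a divergent Gamma integral unless $N-\alpha<2$); the correct identity is $|x|^{-(N-\alpha)}=c\int_0^\infty t^{-\frac{N-\alpha}{2}-1}e^{-|x|^2/(4t)}\,dt$, or equivalently $c\int_0^\infty s^{\frac{N-\alpha}{2}-1}e^{-s|x|^2}\,ds$. This is a sign error in an exponent, not a conceptual gap, and the positivity argument $\iint f(x)e^{-|x-y|^2/(4t)}f(y)\,dx\,dy=c_t\int|G_{t/2}*f|^2\geq0$ goes through unchanged once it is fixed.
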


For simplicity, for any $u\in H$ with $u^\pm\neq0$, we use the following notations in what follows:
\begin{equation}\label{3.1}
\left\{%
\begin{array}{ll}
A_1:=\|u^+\|^2,~~~B_1:=\ds\int_{\R^N}(I_\alpha*|u^+|^p)|u^+|^p,~~~~B:=\ds\int_{\R^N}(I_\alpha*|u^-|^p)|u^+|^p, \\
 A_2:=\|u^-\|^2,~~~B_2:=\ds\int_{\R^N}(I_\alpha*|u^-|^p)|u^-|^p,\\
\end{array}%
\right.
\end{equation}
where $N\geq3$, $\alpha\in((N-4)_+,N)$ and $2\leq p<\frac{N+\alpha}{N-2}$. Then $A_i,B_i~(i=1,2),B>0$. Moreover, Lemma \ref{3.1} shows that
\begin{equation}\label{3.2}
B^2<B_1B_2.
\end{equation}

\begin{lemma}\label{lem3.2}~~Let $\overline{\mathcal{M}}$ be defined by \eqref{1.16}. If $2\leq p<\frac{N+\alpha}{N-2}$, then for any $u\in H$ with $u^\pm\neq0$, there exists a unique pair $(t,s):=(t(u),s(u))\in \R_+\times\R_+$ such that $tu^++su^-\in \overline{\mathcal{M}}$.
\end{lemma}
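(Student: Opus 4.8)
The plan is to mimic the scheme of Lemma \ref{lem2.1}: finding $(t,s)\in\R_+\times\R_+$ with $tu^++su^-\in\overline{\mathcal M}$ is equivalent, by the formula for $\langle\Psi'(\cdot),(\cdot)^\pm\rangle$ and \eqref{1.18}--\eqref{1.19}, to solving the system
\begin{equation}\label{3.3}
\left\{
\begin{array}{ll}
t^2A_1 = t^{2p}B_1 + t^ps^pB, \\
s^2A_2 = s^{2p}B_2 + t^ps^pB,\\
t,s>0,
\end{array}
\right.
\end{equation}
where $A_i,B_i,B$ are as in \eqref{3.1} (note $B$ is symmetric: $\int(I_\alpha*|u^+|^p)|u^-|^p=\int(I_\alpha*|u^-|^p)|u^+|^p$). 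I would first divide the two equations by $t^2$ and $s^2$ respectively and, since $2p-2>0$, solve the first equation for $s^p$ as a function of $t$: from $t^2A_1=t^{2p}B_1+t^ps^pB$ we get
\begin{equation}\label{3.4}
s^p=\frac{A_1}{B}\,t^{2-p}-\frac{B_1}{B}\,t^{p}=:\psi(t).
\end{equation}
The function $\psi$ is continuous on $(0,\infty)$ with $\psi(0^+)=+\infty$ (because $2-p<0$ when $p>2$; when $p=2$ one has $\psi(0^+)=A_1/B>0$) and $\psi(+\infty)=-\infty$, is strictly decreasing (each term is decreasing in $t$ for $p\ge2$), so it has a unique zero $t^*>0$, $\psi>0$ on $(0,t^*)$ and $\psi<0$ on $(t^*,\infty)$. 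Thus $s=\psi(t)^{1/p}$ is well-defined precisely for $t\in(0,t^*)$, strictly decreasing there, with $s\to\infty$ as $t\to0^+$ and $s\to0^+$ as $t\to t^{*-}$.

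Substituting \eqref{3.4} into the second equation of \eqref{3.3}, divide by $s^2=\psi(t)^{2/p}$ to reduce the problem to finding a zero on $(0,t^*)$ of
\begin{equation}\label{3.5}
G(t):=A_2-B_2\,\psi(t)^{\frac{2p-2}{p}}-B\,t^p\,\psi(t)^{-\frac{p-2}{p}}.
\end{equation}
Here I would check the boundary behaviour: as $t\to t^{*-}$, $\psi(t)\to0^+$, so $\psi(t)^{(2p-2)/p}\to0$; the last term equals $B\,t^p\psi(t)^{-(p-2)/p}$, which tends to $0$ when $p>2$ (negative power of something $\to0$ is $+\infty$, but it is multiplied — wait, $-(p-2)/p<0$ so $\psi^{-(p-2)/p}\to+\infty$) — so I must instead rewrite $B\,t^p\,\psi(t)^{-(p-2)/p}$ as $B\,t^p/\psi(t)^{(p-2)/p}$ and note that near $t^*$ one can go back to the first equation of \eqref{3.3}: $t^ps^pB=t^2A_1-t^{2p}B_1$, hence $t^ps^{p-2}B=(t^2A_1-t^{2p}B_1)/s^2\to+\infty$ as $s\to0^+$; therefore $G(t)\to-\infty$ as $t\to t^{*-}$. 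As $t\to0^+$: $s\to+\infty$, and from the first equation $t^{2p-2}B_1+t^{p-2}s^pB=A_1$ stays bounded, so $t^ps^{p-2}B\le t^{p-2}s^pB\cdot t^2/s^2\to0$ — more carefully, $t^ps^p B = t^2A_1-t^{2p}B_1\to0$, and $s^2\to\infty$, so $B t^p s^{p-2}=Bt^ps^p/s^2\to0$; meanwhile $B_2\psi(t)^{(2p-2)/p}=B_2 s^{2p-2}\to+\infty$. Hence $G(t)\to-\infty$ as $t\to0^+$ as well. This sign pattern alone does not give a zero, so I need the interior structure.

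The key step, and the main obstacle, is to show $G$ has exactly one zero on $(0,t^*)$. For this I would argue that $G$ is strictly concave, or that it has a unique interior maximum with positive value, using the strict monotonicity of $\psi$ together with the inequality \eqref{3.2}, i.e. $B^2<B_1B_2$ (this is where $p\ge2$ and the Hardy--Littlewood--Sobolev/Lemma \ref{lem3.1} enter, guaranteeing the nonlocal "cross term" is dominated by the product of the diagonal ones). Concretely, I expect to compute $G'$, express it through $\psi'$ and $\psi$, and show $G'$ changes sign exactly once from $+$ to $-$: the term $-B_2\psi^{(2p-2)/p}$ is increasing in $t$ (since $\psi$ decreasing), while $-Bt^p\psi^{-(p-2)/p}$ must be handled by substituting the constraint to kill the singular behaviour; combining these, together with $\max_{(0,t^*)}G>0$ — which I would get by exhibiting one value, e.g. relating $G$ at the point where $t=s$ to the quantity $A_2 - (B_1B_2)^{1/2}\cdot(\text{something}) - \ldots$ and invoking $B<\sqrt{B_1B_2}$ — yields existence; strict concavity (or the single sign change of $G'$) yields uniqueness. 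Finally, setting $s=\psi(t_0)^{1/p}$ for the unique zero $t_0$ produces the unique pair with $t_0u^++s_0u^-\in\overline{\mathcal M}$. The delicate point throughout is the bookkeeping of the singular exponents $-(p-2)/p$ near $t^*$; I would systematically replace $t^ps^pB$ by $t^2A_1-t^{2p}B_1$ (and symmetrically $s^2A_2-s^{2p}B_2$) whenever such a term threatens to blow up, which both regularizes the limits and brings in $A_1,A_2,B_1,B_2$ in a way that lets \eqref{3.2} do its work.
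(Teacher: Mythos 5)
Your reduction is the right one and, up to normalization, is the same as the paper's: solve the first equation of the system for $s^p=\psi(t)=\frac{A_1}{B}t^{2-p}-\frac{B_1}{B}t^{p}$ and substitute into the second. But two things go wrong after that. First, there is a sign error in an exponent that derails your endpoint analysis: since $s^{p-2}=(s^p)^{(p-2)/p}=\psi(t)^{(p-2)/p}$ (a \emph{positive} exponent), the last term of your $G$ is $B\,t^{p}\psi(t)^{(p-2)/p}$, not $B\,t^{p}\psi(t)^{-(p-2)/p}$. Hence as $t\to t^{*-}$, where $\psi\to0^{+}$, this term tends to $0$ for $p>2$ and $G(t)\to A_2>0$, not $-\infty$; your auxiliary claim that $(t^2A_1-t^{2p}B_1)/s^2\to+\infty$ is likewise false, since that quantity equals $Bt^ps^{p-2}\to0$. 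Note that if your limits were correct ($G\to-\infty$ at both ends with a positive interior maximum), $G$ would have at least two zeros, contradicting the uniqueness the lemma asserts --- this should have flagged the problem. With the corrected limit the intermediate value theorem already gives existence, so the part of your plan devoted to exhibiting a point where $G>0$ is unnecessary.

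Second, and more seriously, the uniqueness step is never actually carried out: you write that you ``would argue'' $G$ is strictly concave or that $G'$ changes sign once, and that you ``expect to compute $G'$,'' but none of this is done, and it is exactly here that $B^2<B_1B_2$ must do its work. The paper sidesteps the difficulty with a better normalization: dividing the second equation by $s^{p}$ and multiplying by $t^{p-2}$ (equivalently, working with $h(t)=t^{p-2}\psi(t)^{(2-p)/p}G(t)$) gives
$$h(t)=A_2\left(\frac{A_1}{Bt^{2p-2}}-\frac{B_1}{B}\right)^{\frac{2-p}{p}}-\frac{B^2-B_1B_2}{B}\,t^{2p-2}-\frac{B_2A_1}{B},$$
for which $h(0^+)=-\frac{B_2A_1}{B}<0$, $h\to+\infty$ at the right endpoint of the admissible interval, and $h'(t)>0$ throughout (both terms of $h'$ are positive, the second precisely because $B^2-B_1B_2<0$); existence and uniqueness then follow in one stroke. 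You need either to adopt such a normalization or to supply the missing monotonicity argument for your $G$. Finally, the case $p=2$ should be handled separately, as the paper does: the system is then linear in $(t^2,s^2)$ with determinant $B_1B_2-B^2>0$, and the asymptotics you invoke for $\psi$ are specific to $p>2$.
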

\begin{proof}
For any $u\in H$ with $u^\pm\neq0$, $tu^++su^-\in \overline{\mathcal{M}}$ for some $t,s>0$ is equivalent to
\begin{equation}\label{3.3}
A_1t^2-B_1t^{2p}-Bt^ps^p=0,~~~~~A_2s^2-B_2s^{2p}-Bs^pt^p=0,~~t,s>0.
\end{equation}

 If $p=2$, then \eqref{3.3} turns to be a linear system about $(t^2,s^2)$:
 $$\left\{%
\begin{array}{ll}
B_1t^2+Bs^2=A_1, \\
 Bt^2+B_2s^2=A_2,\\
 t,s>0.\\
\end{array}%
\right.$$
Hence \eqref{3.2} implies that the linear system has a unique solution in $(0,+\infty)\times(0,+\infty)$.

For the case $2<p<\frac{N+\alpha}{N-2}$, by \eqref{3.3} we have $s^p=\frac{t^{2-p}A_1-t^pB_1}{B}>0$, which implies that $0<t<(\frac{A_1}{B_1})^{\frac{1}{2p-2}}$. Then \eqref{3.3} is equivalent to
$$h(t):=A_2\left(\frac{A_1}{Bt^{2p-2}}-\frac{B_1}{B}\right)^{\frac{2-p}{p}}-\frac{B^2-B_1B_2}{B}t^{2p-2}-\frac{B_2A_1}{B}=0,~~0<t<\left(\frac{A_1}{B_1}\right)^{\frac{1}{2p-2}}.$$
By $p>2$ again, we see that
\begin{equation}\label{3.4}
\lim\limits_{t\rightarrow0^+}h(t)=-\frac{B_2A_1}{B}<0,~~~~~~~\lim\limits_{t\rightarrow \left((\frac{A_1}{B_1})^{\frac{1}{2p-2}}\right)^-}h(t)=+\infty.
\end{equation}
Moreover, by \eqref{3.2}, we have for any $t\in(0,(\frac{A_1}{B_1})^{\frac{1}{2p-2}})$,
$$h'(t)=\frac{2(2-p)(1-p)A_1A_2}{pB}\left(\frac{A_1}{Bt^{2p-2}}-\frac{B_1}{B}\right)^{\frac{2-2p}{p}}t^{1-2p}-2(p-1)\frac{B^2-B_1B_2}{B}t^{2p-3}>0,$$
which and \eqref{3.4} imply that there exists a unique $0<t_0<(\frac{A_1}{B_1})^{\frac{1}{2p-2}}$ such that $h(t_0)=0$. Let $s_0=(\frac{A_1t_0^{2-p}-B_1t_0^p}{B})^{\frac{1}{p}}$, so there exists a unique pair $(t_0,s_0)\in\R_+\times\R_+$ such that $t_0u^++s_0u^-\in\overline{\mathcal{M}}$.
\end{proof}

\begin{lemma}\label{lem3.3}~~$\Psi(u)$ is bounded from below and coercive on $\overline{\mathcal{M}}$.
\end{lemma}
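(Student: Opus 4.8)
The plan is to exploit the fact that $\overline{\mathcal{M}}$ sits inside the full Nehari manifold $\overline{\mathcal{N}}$. For $u\in\overline{\mathcal{M}}$ we write $u=u^++u^-$ and note that, since $\Psi'(u)$ is linear in its test direction, $\langle\Psi'(u),u\rangle=\langle\Psi'(u),u^+\rangle+\langle\Psi'(u),u^-\rangle=0$. Hence every $u\in\overline{\mathcal{M}}$ satisfies the identity $\|u\|^2=\int_{\R^N}(I_\alpha*|u|^p)|u|^p$.

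Next I would use this identity to eliminate the nonlocal term from $\Psi$. For $u\in\overline{\mathcal{M}}$,
$$\Psi(u)=\Psi(u)-\frac{1}{2p}\langle\Psi'(u),u\rangle=\left(\frac12-\frac{1}{2p}\right)\|u\|^2=\frac{p-1}{2p}\|u\|^2.$$
Since $p\geq2$, the factor $\frac{p-1}{2p}$ is strictly positive, so $\Psi(u)\geq0$ for all $u\in\overline{\mathcal{M}}$, which gives the lower bound, and $\Psi(u)\to+\infty$ whenever $\|u\|\to+\infty$ with $u\in\overline{\mathcal{M}}$, which is precisely coercivity on $\overline{\mathcal{M}}$.

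There is essentially no obstacle here: the cancellation in the computation above only uses linearity of $\Psi'(u)$ in the test direction, so it is unaffected by the failure of the decompositions \eqref{1.17}--\eqref{1.19}, which only concern $\Psi(u^+)+\Psi(u^-)$ and $\langle\Psi'(u^\pm),u^\pm\rangle$. (If a uniform lower bound $\|u^\pm\|\geq C>0$ on $\overline{\mathcal{M}}$ is needed later, one can test $\langle\Psi'(u),u^\pm\rangle=0$ against the Hardy--Littlewood--Sobolev and Sobolev inequalities exactly as in Lemma \ref{lem2.3}, but that is not required for the present statement.)
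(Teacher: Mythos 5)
Your proof is correct and is essentially identical to the paper's own argument: both add and subtract $\frac{1}{2p}\langle\Psi'(u),u\rangle=0$ to get $\Psi(u)=\bigl(\frac12-\frac{1}{2p}\bigr)\|u\|^2$ on $\overline{\mathcal{M}}$, from which boundedness below and coercivity are immediate. Your extra remark that $\langle\Psi'(u),u\rangle=0$ follows by linearity from the two constraints defining $\overline{\mathcal{M}}$ is a correct (and implicit in the paper) justification.
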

\begin{proof}
For any $u\in \overline{\mathcal{M}}$, we have $\langle \Psi'(u),u\rangle=0$, then
$$\Psi(u)=\Psi(u)-\frac{1}{2p}\langle \Psi'(u),u\rangle=\left(\frac12-\frac{1}{2p}\right)\|u\|^2\geq0,$$
which implies that $\Psi(u)$ is bounded from below and coercive on $\overline{\mathcal{M}}$.
\end{proof}

By Lemmas \ref{lem3.1} and \ref{lem3.3}, define
\begin{equation}\label{3.5}
\overline{m}:=\inf\{\Psi(u)|~u\in \overline{\mathcal{M}}\},
\end{equation}then $\overline{m}>0$.

\begin{lemma}\label{lem3.4}~~$\overline{m}<2\bar{c}.$
\end{lemma}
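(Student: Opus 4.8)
The plan is to exhibit an explicit sign-changing competitor built from the positive ground state $w$ of \eqref{1.13} and a reflected copy of it placed far away, and then to show that the energy of the rescaled version of this competitor lying on $\overline{\mathcal M}$ is strictly below $2\bar c$. Recall that $\bar c>0$ is attained by $w$ with $w>0$ and $\langle\Psi'(w),w\rangle=0$. Since $V$ satisfies $(V)$, if $V$ is a constant we may use a radial $w$ and translate; if $V(x)\to+\infty$ this is more delicate, but one can still use truncations of $w$ — in any case the key point I would exploit is that the interaction term $\int_{\R^N}(I_\alpha*|u^+|^p)|u^-|^p$ can be made arbitrarily small.

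Concretely, first I would fix $R>0$ large, set $w_R^+:=w\,\chi_{B_R}$ (a truncation so both pieces have disjoint support) or, in the constant-potential case, $w^+:=w$ and $w^-:=-w(\cdot-R e_1)$ for a unit vector $e_1$, and put $u_R:=w_R^+ - w_R^-$ so that $u_R^\pm\neq 0$ and the cross term $B(R):=\int_{\R^N}(I_\alpha*|u_R^-|^p)|u_R^+|^p\to 0$ as $R\to\infty$, by the decay of the Riesz kernel $I_\alpha$ together with the decay of $w$ (exponential decay of $w$, and boundedness of the $L^{2Np/(N+\alpha)}$ norms on separating supports). At the same time $\Psi(u_R^\pm)\to\bar c$ and $\langle\Psi'(u_R^\pm),u_R^\pm\rangle\to 0$ as $R\to\infty$ (in the truncation case this uses that truncating $w$ outside a large ball costs arbitrarily little energy, since $w\in H$).

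Next, by Lemma \ref{lem3.2} there is a unique pair $(t_R,s_R)\in\R_+\times\R_+$ with $t_R u_R^+ + s_R u_R^- \in \overline{\mathcal M}$; hence
\[
\overline m \le \Psi(t_R u_R^+ + s_R u_R^-).
\]
Using the decomposition \eqref{1.17} this equals $\Psi(t_R u_R^+)+\Psi(s_R u_R^-)-\tfrac1p t_R^p s_R^p B(R)$. I would first show $t_R,s_R$ stay bounded and bounded away from $0$ (using the defining system \eqref{3.3}, the uniform bounds on $A_i,B_i$ and $B(R)\to0$), and in fact $t_R,s_R\to 1$ as $R\to\infty$: passing to the limit in \eqref{3.3} and using $\langle\Psi'(u_R^\pm),u_R^\pm\rangle\to0$ forces $A_i-B_i\to0$, so the limiting system is $t^2=t^{2p}$, $s^2=s^{2p}$, giving $t=s=1$. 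Therefore $\Psi(t_R u_R^+)\to\bar c$ and $\Psi(s_R u_R^-)\to\bar c$, while $\tfrac1p t_R^p s_R^p B(R)\to 0$, so
\[
\limsup_{R\to\infty}\Psi(t_R u_R^+ + s_R u_R^-) = 2\bar c - 0 = 2\bar c.
\]
This only gives $\overline m\le 2\bar c$, so the final — and most delicate — step is to squeeze out strict inequality. Here I would argue that for every finite $R$ one has $B(R)>0$ strictly (the Riesz potential is strictly positive and the pieces are nontrivial), and more importantly that $\Psi$ restricted to the Nehari manifold $\overline{\mathcal N}$ attains its minimum $\bar c$ \emph{only} at $\pm w$ (up to the relevant invariance), so that a genuinely sign-changing configuration — which can never coincide with $\pm w$ — must have $\Psi(t_R u_R^+)+\Psi(s_R u_R^-)$ strictly above $2\bar c$ only by a vanishing amount, but the subtracted term $\tfrac1p t_R^p s_R^p B(R)$ is strictly positive. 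The cleanest route is: pick $R$ so large that $\Psi(t_R u_R^+)+\Psi(s_R u_R^-) < 2\bar c + \tfrac{1}{2p} t_R^p s_R^p B(R)$ (possible since the left side $\to 2\bar c$ and $B(R)>0$ along a subsequence is bounded below relative to the energy defect — this needs care), whence $\overline m\le \Psi(t_R u_R^+ + s_R u_R^-) < 2\bar c$. I expect the main obstacle to be making this last quantitative comparison rigorous: one must control the \emph{rate} at which $\Psi(t_R u_R^+)+\Psi(s_R u_R^-)-2\bar c$ tends to $0$ against the rate at which $B(R)\to 0$, or else avoid the issue entirely by choosing the two bumps to overlap slightly on a fixed small region so that $B(R)$ is bounded below by a fixed positive constant while the energies of the pieces are still within $o(1)$ of $\bar c$; the latter trick (a fixed small overlap rather than full separation) is what I would ultimately use to guarantee strictness.
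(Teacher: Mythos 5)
Your construction (two disjointly supported bumps built from the positive ground state, projected onto $\overline{\mathcal{M}}$ via Lemma \ref{lem3.2}) is the same as the paper's, but your main line of argument sends the interaction term $B(R)=\int_{\R^N}(I_\alpha*|u_R^-|^p)|u_R^+|^p$ to zero, which forces $t_R,s_R\to1$ and yields only $\overline{m}\le 2\bar{c}$ --- as you yourself note. The strict inequality is the entire content of the lemma, and neither of the repairs you sketch closes it. The rate comparison between $\Psi(t_Ru_R^+)+\Psi(s_Ru_R^-)-2\bar{c}$ and $B(R)$ is exactly the delicate point you admit you cannot control; worse, the sign of the defect $\langle\Psi'(u_R^\pm),u_R^\pm\rangle$ produced by truncation is not determined (truncation decreases the nonlocal self-interaction, so $A_i-B_i$ tends to be positive), so $t_R,s_R$ may well approach $1$ from above and the sum of the two energies may exceed $2\bar{c}$ before the cross term is subtracted. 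Your fallback of ``letting the bumps overlap slightly'' is not admissible as stated: if the supports overlap, $(tu^++su^-)^\pm$ are no longer multiples of the two bumps, and the projection of Lemma \ref{lem3.2} no longer applies to them.

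The paper's mechanism for strictness is the one your second repair gestures at, but carried out with \emph{disjoint} supports: since the Riesz kernel is everywhere positive and only polynomially decaying, the cross term can be kept bounded away from zero (in the paper it converges to $\int_{\R^N}(I_\alpha*|Q|^p)|Q|^p>0$) while each piece still carries energy $\bar{c}+o(1)$. Feeding a nonvanishing cross term into the system \eqref{3.3} and using $\langle\Psi'(Q),Q\rangle=0$ gives in the limit
$$\frac{1}{t_0^{2p-2}}-1=\frac{s_0^p}{t_0^p}>0,\qquad \frac{1}{s_0^{2p-2}}-1=\frac{t_0^p}{s_0^p}>0,$$
hence $t_0,s_0\in(0,1)$ strictly; combined with the identity $\Psi(v)=\left(\frac12-\frac{1}{2p}\right)\|v\|^2$ for $v\in\overline{\mathcal{M}}$, this yields $\overline{m}\le (t_0^2+s_0^2)\bar{c}+o_R(1)<2\bar{c}$ with no rate comparison at all. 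This quantitative step --- a non-vanishing interaction forcing the projection coefficients strictly below $1$, together with the clean formula for $\Psi$ on $\overline{\mathcal{M}}$ --- is what is missing from your write-up; without it the argument stops at $\overline{m}\le 2\bar{c}$. (Your appeal to uniqueness of the minimizer of $\Psi$ on $\overline{\mathcal{N}}$ is neither proved nor needed.)
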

\begin{proof}~~Let $Q$ be a positive ground state solution of problem \eqref{1.13}, i.e. $$\Psi'(Q)=0,~~~~\Psi(Q)=\bar{c}~~\hbox{and}~~Q(x)>0~~\hbox{for~all}~x\in\R^N.$$
 Assume that $\xi\in C_0^\infty(\R^N)$ is a cut-off function satisfying that supp$\xi\in B_2(0)$, $0\leq \xi\leq 1$, $\xi\equiv 1$ on $B_1(0)$ and $|\nabla \xi|<2$. Set
 $$u_R(x):=\xi(\frac{x}{R})Q(x)\geq0,~~~~v_{R,n}(x):=-\xi(\frac{x-x_n}{R})Q(x)\leq0,$$
where $R>0$ and $x_n=(0,0,\cdots,0,n)$. It easily sees that for $n$ large enough,
$$\hbox{supp}u_R\cap \hbox{supp}v_{R,n}=\varnothing.$$
Hence by Lemma \ref{3.2}, for $n$ large there exists a unique pair $(t_R,s_R)\in\R_+\times\R_+$ such that $t_Ru_R+s_Rv_{R,n}\in\overline{\mathcal{M}},$ i.e.
\begin{equation}\label{3.7}
\left\{%
\begin{array}{ll}
t_R^{2}\|u_R\|^2-t_R^{2p}\ds\int_{\R^N}(I_\alpha*|u_R|^p)|u_R|^p=s_R^pt_R^p\ds\int_{\R^N}(I_\alpha*|v_{R,n}|^p)|u_R|^p, \\
 s_R^2\|v_{R,n}\|^2-s_R^{2p}\ds\int_{\R^N}(I_\alpha*|v_{R,n}|^p)|v_{R,n}|^p=t_R^ps_R^p\ds\int_{\R^N}(I_\alpha*|u_R|^p)|v_{R,n}|^p.\\
\end{array}%
\right.\end{equation}
By the definition of $u_R$ and $v_{R,n}$, we have
\begin{equation}\label{3.6}
u_R\rightarrow Q~~\hbox{in}~H,~~~~~~v_{R,n}\rightarrow -Q~~\hbox{in}~H
\end{equation}
as $R\rightarrow+\infty$. If $\lim\limits_{R\rightarrow+\infty}t_R=+\infty$, then by $p\geq2$ and \eqref{3.7}, we see that
\begin{equation}\label{3.13}\begin{array}{ll}
0\leq\ds\frac{s_R^p}{t_R^p}\ds\int_{\R^N}(I_\alpha*|v_{R,n}|^p)|u_R|^p&=\ds\frac{1}{t_R^{2p-2}}\|u_R\|^2-\ds\int_{\R^N}(I_\alpha*|u_R|^p)|u_R|^p\\[5mm]
&\rightarrow-\ds\int_{\R^N}(I_\alpha*|Q|^p)|Q|^p<0,
\end{array}
\end{equation}
which is impossible. So, $t_R$ is uniformly bounded. Similarly, if $\lim\limits_{R\rightarrow+\infty}s_R=+\infty$, then by \eqref{3.7} again,
\begin{equation}\label{3.14}\begin{array}{ll}
0\leq\ds\frac{t_R^p}{s_R^p}\ds\int_{\R^N}(I_\alpha*|u_R|^p)|v_{R,n}|^p&=\ds\frac{1}{s_R^{2p-2}}\|v_{R,n}\|^2-\ds\int_{\R^N}(I_\alpha*|v_{R,n}|^p)|v_{R,n}|^p\\[5mm]
&\rightarrow-\ds\int_{\R^N}(I_\alpha*|Q|^p)|Q|^p<0,
\end{array}
\end{equation}
which is a contradiction. So $s_R$ is also uniformly bounded. Up to a subsequence, we may assume that there exist $t_0,s_0\in[0,+\infty)$ such that $$t_R\rightarrow t_0~~~\hbox{and}~~~s_R\rightarrow s_0$$
as $R\rightarrow+\infty.$ Moreover, by \eqref{3.6}-\eqref{3.14}, we see that if $t_0=0$ or $s_0=0$, then
$$\begin{array}{ll}
&~~~\left(\ds\int_{\R^N}(I_\alpha*|Q|^p)|Q|^p\right)^2\\[5mm]
&=\lim\limits_{R\rightarrow+\infty}\left(\ds\int_{\R^N}(I_\alpha*|u_R|^p)|v_{R,n}|^p\right)^2\\[5mm]
&=\lim\limits_{R\rightarrow+\infty}\left(\ds\frac{\|u_R\|^2}{t_R^{2p-2}}-\ds\int_{\R^N}(I_\alpha*|u_R|^p)|u_R|^p\right)\left(\ds\frac{\|v_{R,n}\|^2}{s_R^{2p-2}}-\ds\int_{\R^N}(I_\alpha*|v_{R,n}|^p)|v_{R,n}|^p\right)\\[5mm]
&=+\infty,
\end{array}$$
which is impossible. So $t_0,s_0\in(0,+\infty)$. Then we conclude from \eqref{3.7} and \eqref{3.6} that
$$\left\{%
\begin{array}{ll}
t_0^{2}\|Q\|^2-t_0^{2p}\ds\int_{\R^N}(I_\alpha*|Q|^p)|Q|^p=s_0^pt_0^p\ds\int_{\R^N}(I_\alpha*|Q|^p)|Q|^p,\\
s_0^{2}\|Q\|^2-s_0^{2p}\ds\int_{\R^N}(I_\alpha*|Q|^p)|Q|^p=t_0^ps_0^p\ds\int_{\R^N}(I_\alpha*|Q|^p)|Q|^p.\\
\end{array}%
\right.$$
Since $\Psi'(Q)=0$, we have
$$\frac{1}{t_0^{2p-2}}-1=\frac{s_0^p}{t_0^p},~~~~~~~\frac{1}{s_0^{2p-2}}-1=\frac{t_0^p}{s_0^p}.$$
So $0<t_0<1$ and $0<s_0<1$.

For $n$ large, by $t_Ru_R+s_Rv_{R,n}\in\overline{\mathcal{M}}$, we have
$$\begin{array}{ll}
\overline{m}&\leq \Psi(t_Ru_R+s_Rv_{R,n})\\[5mm]
&=\ds\Psi(t_Ru_R+s_Rv_{R,n})-\frac{1}{2p}\langle \Psi'(t_Ru_R+s_Rv_{R,n}),t_Ru_R+s_Rv_{R,n}\rangle\\[5mm]
&=\ds\left(\frac{1}{2}-\frac{1}{2p}\right)(\|t_Ru_R\|^2+\|s_Rv_{R,n}\|^2)\\[5mm]
&=(t_0^2+s_0^2)\ds\left(\frac{1}{2}-\frac{1}{2p}\right)\|Q\|^2+o_R(1)\\[5mm]
&=(t_0^2+s_0^2)\bar{c}+o_R(1),
\end{array}$$
where $o_R(1)\rightarrow0$ as $R\rightarrow+\infty$. Therefore, by taking $R\rightarrow+\infty$, we get
$$\overline{m}\leq(t_0^2+s_0^2)\bar{c}<2\bar{c}.$$
\end{proof}

\noindent $\textbf{Proof of Theorem \ref{th1.2}}$\,\,\

\begin{proof}~~By Lemma \ref{lem3.3} and the Ekeland variational principle, there exists a minimizing sequence $\{u_n\}\subset\overline{\mathcal{M}}$ such that
\begin{equation}\label{3.8}
\Psi(u_n)\leq \overline{m}+\frac1n,
\end{equation}
\begin{equation}\label{3.9}
\Psi(v)\geq \Psi(u_n)-\frac1n\|u_n-v\|,~~~~\forall~v\in\overline{\mathcal{M}}.
\end{equation}
Then the sequences $\{u_n^\pm\}$ are uniformly bounded in $H$ respectively. Passing to a subsequence, there exist $u^\pm\in H$ such that
\begin{equation}\label{3.10}
u_n^\pm\rightharpoonup u^\pm~~~\hbox{in}~~H.
\end{equation}
Moreover, $u^+\geq0,$ $u^-\leq 0$ and $u^+\cdot u^-=0$ a.e. in $\R^N.$
To prove this theorem, it is enough to show that
\begin{equation}\label{3.11}
\Psi'(u_n)\rightarrow 0.
\end{equation}
Indeed, if \eqref{3.11} holds, set $u:=u^++u^-$, then \eqref{3.10} implies that $\Psi'(u)=0.$ Moreover, by the compactness of the Sobolev embedding, we have $$\ds\int_{\R^N}(I_\alpha*|u_n|^p)|u_n^\pm|^{p}\rightarrow\ds\int_{\R^N}(I_\alpha*|u|^p)|u^\pm|^{p}.$$
Then it follows from $u_n\in \overline{\mathcal{M}}$ that $\|u_n^\pm\|\rightarrow\|u^\pm\|^2.$
 Hence
$$u_n^\pm\rightarrow u^\pm~~\hbox{in}~H.$$
Therefore, $u\in \overline{\mathcal{M}}$ and $\Psi(u)=\overline{m}$, i.e. $u$ is a sign-changing solution of problem \eqref{1.13} with $\Psi(u)=\overline{m}.$ Similarly to the proof of Lemma \ref{lem2.5}, we see that
$$\Psi(u)=\overline{m}=\inf\{\Psi(v)|~v^\pm\neq0,\Psi'(v)=0\}.$$
By $\overline{\mathcal{M}}\subset\overline{\mathcal{N}}$, we must have $\bar{c}\leq \overline{m}=\Psi(u)$. Since each ground state solution of problem \eqref{1.13} has constant sign, it follows that $\Psi(u)>\bar{c}.$ Therefore, by Lemma \ref{lem3.4}, we have $\overline{m}=\Psi(u)\in(\bar{c},2\bar{c}).$\\

For any $\varphi\in C_0^\infty(\R^N)$ and each $n$, we define the following two $C^1$-functions $h_n,g_n:\R^3\rightarrow\R$ as follows:
$$
h_n(\delta,k,l)
=\|(u_n+\delta \varphi+ku_n^++lu_n^-)^+\|^2-\ds\int_{\R^N}(I_\alpha*|(u_n+\delta \varphi+ku_n^++lu_n^-)^+|^p)|(u_n+\delta \varphi+ku_n^++lu_n^-)^+|^p$$
$$~~~~~~~~~~-\ds\int_{\R^N}(I_\alpha*|(u_n+\delta \varphi+ku_n^++lu_n^-)^-|^p)|(u_n+\delta \varphi+ku_n^++lu_n^-)^+|^p,$$
$$
g_n(\delta,k,l)
=\|(u_n+\delta \varphi+ku_n^++lu_n^-)^-\|^2-\ds\int_{\R^N}(I_\alpha*|(u_n+\delta \varphi+ku_n^++lu_n^-)^-|^p)|(u_n+\delta \varphi+ku_n^++lu_n^-)^-|^p$$
$$~~~~~~~~~~-\ds\int_{\R^N}(I_\alpha*|(u_n+\delta \varphi+ku_n^++lu_n^-)^+|^p)|(u_n+\delta \varphi+ku_n^++lu_n^-)^-|^p.$$
Then $h_n(0,0,0)=g_n(0,0,0)=0.$ Moreover,
$$\frac{\partial h_n}{\partial k}(0,0,0)=2(1-p)B_{n,1}+(2-p)B_n,~~~\frac{\partial g_n}{\partial l}(0,0,0)=2(1-p)B_{n,2}+(2-p)B_n$$
and
$$\frac{\partial h_n}{\partial l}(0,0,0)=\frac{\partial g_n}{\partial k}(0,0,0)=-pB_n,$$
where $$B_{n,1}:=\ds\int_{\R^N}(I_\alpha*|u_n^+|^p)|u_n^+|^p>0,~~~~~B_{n,2}:=\ds\int_{\R^N}(I_\alpha*|u_n^-|^p)|u_n^-|^p>0$$
and $$B_n:=\ds\int_{\R^N}(I_\alpha*|u_n^-|^p)|u_n^+|^p>0.$$
Since Lemma \ref{lem3.1} shows that
$$B_n<\sqrt{B_{n,1}B_{n,2}}\leq \frac{B_{n,1}+B_{n,2}}{2},$$
we conclude from $p\geq2$ that
$$\begin{array}{ll}
T&:=\ds\left|\begin{array}{cc}
\frac{\partial h_n}{\partial k}(0,0,0)&\frac{\partial h_n}{\partial l}(0,0,0)\\
\frac{\partial g_n}{\partial k}(0,0,0)&\frac{\partial g_n}{\partial l}(0,0,0)
\end{array}\right|\\[5mm]
&=4(1-p)^2B_{n,1}B_{n,2}+2(1-p)(2-p)(B_{n,1}+B_{n,2})B_n+4(1-p)B_n^2\\[5mm]
&\geq\left\{%
\begin{array}{ll}
4(B_{n,1}B_{n,2}-B_n^2)>0, &\hbox{if}~p=2,\\
8(1-p)(2-p)B_n^2>0, & \hbox{if}~p>2.\\
\end{array}%
\right.\\[5mm]
\end{array}$$
By the implicit function theorem, there exists a sequence $\{\delta_n\}\subset\R_+$ and two functions $k_n(\delta)\in C^1(-\delta_n,\delta_n)$, $l_n(\delta)\in C^1(-\delta_n,\delta_n)$ satisfying $k_n(0)=0$, $l_n(0)=0$ and
\begin{equation}\label{3.12}
~~~~~~~~~h_n(\delta,k_n(\delta),l_n(\delta))=0,~~~~~g_n(\delta,k_n(\delta),l_n(\delta))=0,~~\forall~\delta\in(-\delta_n,\delta_n).
\end{equation}

Set $\varphi_{n,\delta}:=u_n+\delta\varphi+k_n(\delta)u_n^++l_n(\delta)u_n^-.$ Then \eqref{3.12} implies that $\varphi_{n,\delta}\in \overline{\mathcal{M}}$ for all $\delta\in(-\delta_n,\delta_n)$. By \eqref{3.9} we have
\begin{equation}\label{3.15}
\Psi(\varphi_{n,\delta})-\Psi(u_n)\geq-\frac{1}{n}\|\delta\varphi+k_n(\delta)u_n^++l_n(\delta)u_n^-\|.
\end{equation}
By the Taylor Expansion, we see that
\begin{equation}\label{3.16}
\Psi(\varphi_{n,\delta})=\Psi(u_n)+\delta\langle\Psi'(u_n),\varphi\rangle+o(\|\delta\varphi+k_n(\delta)u_n^++l_n(\delta)u_n^-\|),
\end{equation}
where we have used the fact that $\langle \Psi'(u_n),u_n^\pm\rangle=0$.
Since $\{u_n\}$ is uniformly bounded in $H$ and $T>0$, we conclude that $\{k_n'(0)\}$ and $\{l_n'(0)\}$ are respectively uniformly bounded. Then $$\frac{o(\|\delta\varphi+k_n(\delta)u_n^++l_n(\delta)u_n^-\|)}{\delta}\rightarrow0~~ \hbox{as}~~\delta\rightarrow0,$$
which and \eqref{3.15}, \eqref{3.16} show that
$$|\langle\Psi'(u_n),\varphi\rangle|\leq \frac{C}{n},$$
as $\delta\rightarrow0$, where $C>0$ is a constant independent of $n$. So $ \Psi'(u_n)\rightarrow0$.
\end{proof}




\begin{thebibliography}{99}
\addcontentsline{toc}{section}{\protect \heiti 参考文献} 

\bibitem{ac}
C. O. Alves, F. J. S. A. Crrea, On existence of solutions for a
class of problem involving a nonlinear operator, Comm. Appl.
Nonlinear Anal. 8 (2001), 43-56.

\bibitem{ds}
P. D'Ancona, S. Spagnolo, Global solvability for the degenerate
Kirchhoff equation with real analytic data, Invent. Math. 108 (2)
 (1992), 247-262.

\bibitem{ap1}
A. Arosio, S. Panizzi, On the well-posedness of the Kirchhoff
string, Trans. Amer. Math. Soc. 348 (1) (1996), 305-330.

\bibitem{blw}
T. Bartsch, Z. L. Liu, T. Weth, Sign changing solutions of superlinear Schr\"{o}dinger equations, Commun. Partial Differ. Equ. 29 (2004), 25-42.

\bibitem{bwang}
T. Bartsch, Z. Q. Wang, Existence and multiplicity results for some superlinear elliptic problems on $\R^N$, Commun. Partial Differ. Equ. 20 (1995), 1725-1741.

\bibitem{bw}
T. Bartsch, T. Weth, Three nodal solutions of singularly perturbed elliptic equations on domains without topology, Ann. Inst. H. Poincar\'{e} Anal. Non Lin\'{e}aire 22 (2005), 259-281.

\bibitem{b}
S. Bernstein, Sur une classe d'$\acute{e}$quations fonctionelles
aux d$\acute{e}$riv$\acute{e}$es partielles, Bull. Acad. Sci. URSS.
S$\acute{e}$r. 4 (1940), 17-26.

\bibitem{ccn}
A. Castro, J. Cossio, J. M. Neuberger, A sign-changing solution for a superlinear Dirichlet problem. Rocky Mountain J. Math. 27 (1997), 1041-1053.

\bibitem{cd}
 M. M. Cavalcanti, V. N. Domingos Cavalcanti, J. A. Soriano,
Global existence and uniform decay rates for the Kirchhoff-Carrier
equation with nonlinear dissipation, Adv. Differ. Equ. 6
(6) (2001), 701-730.

\bibitem{csv}
P. Choquard, J. Stubbe, M. Vuffray, Stationary solutions of the Schr\"{o}dinger-Newton model--an ODE approach, Differ. Integral Equ. 21 (2008), no. 7-8, 665-679.

\bibitem{cs}
M. Clapp, D. Salazar, Positive and sign changing solutions to a nonlinear Choquard equation, J. Math. Anal. Appl. 407 (2013), 1-15.

\bibitem{fmm}
M. F. Furtado, L. A. Maia, E. S. Medeiros, Positive and nodal solutions for a nonlinear Schr\"{o}dinger equation with indefinite potential, Adv. Nonlinear Stud. 8 (2008), 353-373.

 \bibitem{gv}
H. Genev, G. Venkov, Soliton and blow-up solutions to the time-dependent Schr\"{o}dinger-Hartree equation, Discrete Contin. Dyn. Syst. Ser. S 5 (2012), no. 5, 903-923.

\bibitem{hz}
X. M. He, W. M. Zou, Existence and concentration behavior of
positive solutions for a Kirchhoff equation in $\R^3$, J.
Differ. Equ. 252 (2012), 1813-1834.

\bibitem{jw}
J. H. Jin, X. Wu, Infinitely many radial solutions for
Kirchhoff-type problems in $\R^N$, J. Math. Anal. Appl. 369 (2010),
564-574.

\bibitem{k}
G. Kirchhoff, Mechanik, Teubner, Leipzig, 1883.

\bibitem{ly1}
G. B. Li, H. Y. Ye, Existence of positive ground state solutions for the nonlinear Kirchhoff type equations in $\R^3$, J.
Differ. Equ. 257 (2014), 566-600.

\bibitem{ly2}
G. B. Li, H. Y. Ye, Existence of positive solutions for nonlinear Kirchhoff type problems in $\R^3$ with critical Sobolev exponent, Math. Methods Appl. Sci. 37 (2014), 2570-2584.

\bibitem{lls}
Y. Li, et al., Existence of a positive solution to Kirchhoff type
problems without compactness conditions, J. Differ. Equ. 253 (2012), 2285-2294.

\bibitem{lieb}
E. H. Lieb, Existence and uniqueness of the minimizing solution of Choquard's nonlinear equation, Studies in Appl. Math. 57 (1977), no. 2, 93-105.

\bibitem{ll}
E. Lieb, M. Loss, Analysis, Graduates Studies in Mathematics, vol.14. AMS, 1997.

\bibitem{ls}
E. H. Lieb, B. Simon, The Hartree-Fock theory for Coulomb systems, Comm. Math. Phys. 53 (1977), 185-194.

\bibitem{l}
J. L. Lions, On some questions in boundary value problems of
mathematical physics, in: Contemporary Development in Continuum
Mechanics and Partial Differential Equations, in: North-Holland
Math. Stud., vol. 30, North-Holland, Amsterdam, New York, 1978,
pp. 284-346.

\bibitem{lions}
P. L. Lions, The Choquard equation and related questions, Nonlinear Anal. 4 (1980), no. 6, 1063-1072.


\bibitem{lh}
W. Liu, X. M. He, Multiplicity of high energy solutions for
superlinear Kirchhoff equations, J. Appl. Math. Comput. 39 (2012),
473-487.

\bibitem{mpt}
V. Moroz, R. Penrose, P. Tod, Spherically-symmetric solutions of the Schr\"{o}dinger-Newton equations, Classical Quantum Gravity 15 (1998), no. 9, 2733-2742.

\bibitem{ms1}
V. Moroz, J. V. Schaftingen, Groundstates of nonlinear Choquard equations: existence, qualitative properties and decay asymptotics, J. Funct. Anal. 265 (2013), 153-184.

\bibitem{ms2}
V. Moroz, J. V. Schaftingen, Existence of groundstates for a class of nonlinear Choquard equations, Trans. Amer. Math. Soc. to appear, arXiv: 1212.2027.

\bibitem{nw}
E. S. Noussair, J. C. Wei, On the effect of domain geometry on the existence and profile of nodal solution of some singularly perturbed semilinear Dirichlet problem, Indiana Univ. Math. J. 46 (1997), 1321-1332.

\bibitem{pekar}
S. Pekar, Untersuchung \"{u}ber die Elektronentheorie der Kristalle, Akademie Verlag, Berlin, 1954.

\bibitem{p}
S. I. Pohozaev, A certain class of quasilinear hyperbolic
equations, Mat. Sb. (NS) 96 (138) (1975), 152-166, 168 (in Russian).

\bibitem{r}
M. Riesz, L'int\'{e}grale de Riemann-Liouville et le prodl\`{e}me de Cauchy, Acta Math. 81 (1949), 1-223.

\bibitem{wt}
J. Wang et al., Multiplicity and concentration of positive solutions
for a Kirchhoff type problem with critical growth, J. Differ. Equ. 253 (2012), 2314-2351.

\bibitem{wz}
Z. P. Wang, H. S. Zhou, Sign-changing solutions for nonlinear Schr\"{o}dinger-Poisson system in $\R^3$, Calc. Var. Partial Differ. Equ. DOI 10.1007/s00526-014-0738-5.

\bibitem{w}
X. Wu, Existence of nontrivial solutions and high energy solutions
for Schr\"{o}dinger-Kirchhoff-type equations in $\R^N$, Nonlinear
Anal.: Real World Applications 12 (2011), 1278-1287.

\bibitem{ye}
H. Y. Ye, The sharp existence of constrained minimizers for a class of nonlinear Kirchhoff equations, Math. Methods Appl. Sci. DOI: 10.1002/mma.3247.


\bibitem{z}
W. M. Zou, Sign-Changing Critical Point Theory, Springer, New York (2008).

\end{thebibliography}
 \end{document}